\newcommand{\bfI}{\boldsymbol I}
\newcommand{\mcK}{\mathcal{K}}
\newcommand{\mcE}{\mathcal{E}}
\newcommand{\mcF}{\mathcal{F}}
\newcommand{\mcA}{\mathcal{A}}
\newcommand{\mcT}{\mathcal{T}}
\newcommand{\tn}{|\mspace{-1mu}|\mspace{-1mu}|}
\newcommand{\Gammah}{{\Gamma_h}}
\newcommand{\nablas}{\nabla_\Gamma}
\newcommand{\nablash}{\nabla_{\Gamma_h}}
\newcommand{\nablashb}{\beta_h \cdot \nabla_{\Gamma_h}}
\newcommand{\divs}{\text{div}_\Gamma}
\newcommand{\divsh}{\text{div}_\Gammah}
\newcommand{\IR}{\mathbb{R}}
\newcommand{\Ps}{{P}_\Gamma}
\newcommand{\Psh}{{P}_\Gammah}
\newcommand{\mcKh}{{\mcK_h}}
\newcommand{\mcTh}{{\mcT_h}}
\newcommand{\mcEh}{{\mcE_h}}
\newcommand{\shone}{{s_{h,1}}}
\newcommand{\shtwo}{{s_{h,2}}}
\numberwithin{equation}{section}
\newtheorem{lem}{Lemma}[section]
\newtheorem{thm}{Theorem}[section]
\newtheorem{rem}{Remark}[section]
\newenvironment{proof}{\noindent \newline {\bf Proof.}}
{\hfill \mbox{\fbox{} } \newline}
\begin{document}

\title{\bf A Stabilized Cut Streamline Diffusion Finite Element 
Method for Convection-Diffusion Problems on Surfaces }
\author{Erik Burman
\footnote{Department of Mathematics, University College London,
  London, WC1E  6BT United Kingdom} 
\mbox{ }
 Peter Hansbo
\footnote{Department of Mechanical Engineering, J\"onk\"oping University,
SE-55111 J\"onk\"oping, Sweden.} 
\mbox{ }
Mats G.\ Larson
\footnote{Department of Mathematics and Mathematical Statistics, Ume{\aa} University, SE-90187 Ume{\aa}, Sweden} 
\mbox{ } 
\\
Andr\'e Massing
\footnote{Department of Mathematics and Mathematical Statistics, Ume{\aa} University, SE-90187 Ume{\aa}, Sweden} 
\mbox{ }
Sara Zahedi
\footnote{Department of Mathematics, KTH Royal Institute of Technology,
SE-100 44 Stockholm, Sweden}
}
\maketitle

\begin{abstract} We develop a stabilized cut finite element 
method for the stationary convection diffusion problem on a 
surface embedded in $\IR^d$. The cut finite element method 
is based on using an embedding of the surface into a three 
dimensional mesh consisting of tetrahedra and then using the 
restriction of the standard piecewise linear continuous elements 
to a piecewise linear approximation of the surface. The stabilization 
consists of a standard streamline diffusion stabilization 
term on the discrete surface and a so called normal gradient 
stabilization term on the full tetrahedral elements in the active 
mesh. We prove optimal order a priori error estimates in the 
standard norm associated with the streamline diffusion method 
and bounds for the condition number of the resulting stiffness 
matrix. The condition number is of optimal order $O(h^{-1})$ for 
a specific choice of method parameters.  Numerical example 
supporting our theoretical results are also included.
\end{abstract}
\maketitle

\section{Indroduction}

\paragraph{Contributions.}
We develop and analyze cut finite element method for the 
convection-diffusion problem on surfaces. The cut finite element 
method is constructed as follows: (i) The surface is embedded into 
a three dimensional domain equipped with a family of meshes. 
(ii) A piecewise linear approximation of the surface is computed for 
instance using an interpolation of the distance function. (iii) The active 
mesh is defined as the subset of elements that intersect the discrete 
surface. (iv) A finite element approximation is defined by using a 
variational formulation together with a restriction of the finite element 
space to the active mesh. 

In order to stabilize the method we add two stabilization terms: (i) A
streamline diffusion stabilization, which is added on the discrete surface. (ii) A normal gradient stabilization term 
on the full three dimensional tetrahedra in the active mesh, which 
provides control of the variation of the discrete solution in the 
direction normal to the surface. Streamline diffusion or Streamline 
upwind Petrov--Galerkin methods were introduced in  \cite{BrHu82} 
and \cite{JoNaPi84} for transport dominated problems in flat domains.

With these constructions we can show that the resulting discrete problem 
is well conditioned and that optimal order a priori error estimates in the standard streamline diffusion holds without relying on the presence of 
positive diffusion. In particular, the condition number is of optimal order $O(h^{-1})$ for a specific choice of method parameters. 

\paragraph{Earlier Work.} 
CutFEM, or trace FEM, for partial differential equations on surfaces 
was first introduced for the Laplace-Beltrami operator in
\cite{OlReGr09} without stabilization and is now a rapidly developing
technique. In \cite{BuHaLa15} a stabilized version based on so called
face stabilization or ghost stabilization, which provides control over
the jump in the normal gradient across interior faces in the active
mesh was introduced and analyzed. In particular it was shown that the
condition number scaled in an optimal way. In \cite{OlReXu14b} a
streamline diffusion trace finite element method for the
convection-diffusion problem on a surface was studied, here only
streamline diffusion stabilization was added and the antisymmetric
discretization of the convection term was used. In \cite{BuHaLaZa15}
the pure convection problem on a surface with face stabilization was
analyzed. In \cite{BuHaLaMaZa16} a full gradient stabilization method
for the Laplace-Beltrami operator was developed and analyzed. In
\cite{BuHaLaMa16} an abstract framework for analysis of cut finite
element methods on embedded manifolds of arbitrary codimension was
developed and, in particular, the normal gradient stabilization term
which we employ in this paper was introduced and analyzed (see also
\cite{GLR18} for an analysis in the case of high order approximation). Coupled bulk-surface problems were considered in 
\cite{BuHaLaZa16} and \cite{GrOlRe15}. Higher order versions of 
trace fem for the Laplace-Beltrami operator were analyzed in
\cite{Reu15, GLR18}. Finally in, \cite{HaLaZa15}, \cite{OlRe14}, and \cite{OlReXu14a}, extensions to time dependent problems were presented.


Several other techniques for solving partial differential equations on
surfaces have 
been proposed. Most notably, the original idea of Dziuk, \cite{Dz88} was to use a triangulation of the surface. We refer to \cite{DzEl13} and the references therein for an overview.

%
 
\paragraph{Organization of the Paper.}  In Section 2 we introduce 
the model problem, in Section 3 we define the cut finite element method, in Section 4 we derive our main theoretical results, in Section 5 we prove 
a bound on the condition number, and in Section 6 we present numerical results that confirm our theoretical results.

\section{The Model Problem}
\subsection{The Surface} Let $\Gamma$ be a smooth 
surface without boundary embedded in $\IR^3$ with signed 
distance function $\rho$, 
such that the exterior unit normal to the surface is given by 
$n = \nabla \rho$. We let $p:\IR^3 \rightarrow \Gamma$ be 
the closest point mapping. Then there is a $\delta_0> 0 $ 
such that $p$ maps each point in 
$U_{\delta_0}(\Gamma)$ to precisely one point on $\Gamma$. 
Here $U_\delta(\Gamma) = \{ x \in \IR^3 : |\rho(x)| < \delta\}$ is the 
open tubular neighborhood of $\Gamma$ of thickness $\delta$.

\subsection{Tangential Calculus} For each function $u$ on 
$\Gamma$ we let the extension $u^e$ to the neighborhood 
$U_{\delta_0}(\Gamma)$ be defined by the pull back $u^e = u \circ p$. 
For a function $u: \Gamma \rightarrow \IR$ we then define the 
tangential gradient
\begin{equation}
\nablas u = \Ps \nabla u^e
\end{equation}
where $\Ps = I - n \otimes n$, with $n=n(x)$, is the projection 
onto the tangent plane $T_{x}(\Gamma)$. We also define the 
surface divergence 
\begin{equation}
\divs(u) = \text{tr}(u \otimes \nablas ) 
= \text{div}(u^e) - n \cdot (u^e \otimes \nabla)\cdot n 
\end{equation}
where $(u\otimes \nabla)_{ij} = \partial_j u_i$. It can be shown 
that the tangential derivative does not depend on the particular 
choice of extension.

\subsection{The Convection-Diffusion Problem on $\boldsymbol{\Gamma}$}

The strong form of the convection problem on $\Gamma$ takes 
the form: find $u:\Gamma \rightarrow \IR$ such that
\begin{alignat}{2}\label{eq:conva}
L u = \beta \cdot \nablas u + \alpha u - \epsilon \Delta_\Gamma u &= f \qquad &\text{on $\Gamma$}
\end{alignat}  
where $\beta:\Gamma \rightarrow \IR^3$ is a given tangential vector 
field, $\alpha:\Gamma \rightarrow \IR$, $\epsilon\in \IR_+$, and 
$f:\Gamma \rightarrow \IR$ are given functions. 

We assume that the coefficients 
$\alpha$ and $\beta$ are smooth and that 
there is a constant $\alpha_0$ such that 
\begin{equation}\label{eq:assumdivbeta}
0< \alpha_0 \leq \inf_{x\in \Gamma} (\alpha(x) - \frac{1}{2} \divs \beta(x) ) 
\end{equation} 
We note that using Green's formula and assumption 
(\ref{eq:assumdivbeta}) we obtain the estimate
\begin{equation}
(Lv,v)_\Gamma = ((\alpha - \frac{1}{2}\divs \beta)v,v)_\Gamma 
+ \epsilon \|\nablas v\|^2_\Gamma
\geq \alpha_0 \|v\|_\Gamma^2 + \epsilon \| \nablas v \|^2_\Gamma 
\end{equation}

We introduce the space 
\begin{equation}
V= \left \{ \begin{array}{ll}
H^1(\Gamma)  &\textrm{if $\alpha>0$} \\
\{v \in H^1(\Gamma) : \int_{\Gamma} v=0 \}  &\textrm{if $\alpha = 0$}
\end{array}
\right .
\end{equation}
The weak formulation of~\eqref{eq:conva} takes the form: find $u\in V$ such that 
\begin{equation}\label{eq:weakform}
a(u,v)=(f,v)_{\Gamma}
\end{equation}
where 
\begin{equation}\label{eq:a-small}
a(v,w) = (\beta \cdot\nablas v,w)_{\Gamma} 
+ (\alpha v,w)_{\Gamma} + \epsilon (\nablas v, \nablas w)_{\Gamma}
\end{equation}
In the case $\epsilon>0$ we may conclude using Lax--Millgram's lemma
that there is a unique solution to (\ref{eq:weakform}). In the case
$\epsilon=0$ we refer to \cite{BuHaLaZa15} for an existence result.
\begin{rem}
We will be interested in the behavior of the numerical method for small
$\epsilon$, but will assume that the coefficients  $\alpha_0$, $\alpha$ and $\beta$
are all $O(1)$.
\end{rem}

 \section{The Finite Element Method}

\subsection{The Discrete Surface}
Let $\Omega_0\subset \IR^3$ be a polygonal domain that contains 
$U_{\delta_0}(\Gamma)$ and let $\{\mcT_{0,h}, h \in (0,h_0]\}$ 
be a family of quasiuniform partitions of $\Omega_0$ into shape 
regular tetrahedra with mesh parameter $h$. Let $\Gamma_h\subset \Omega_0$ be a connected surface such that $\Gamma_h \cap T$ is 
a subset of some hyperplane for each $T\in \mcT_{0,h}$ and 
let $n_h$ be the piecewise constant unit normal to $\Gamma_h$. 

\paragraph{Assumption A.} The family $\{\Gamma_h: h \in (0,h_0]\}$ 
approximates $\Gamma$ in the following sense: 
\begin{itemize}
\item $\Gamma_h \subset U_{\delta_0}(\Gamma)$, $\forall h \in (0,h_0]$, and the closest point mapping $p:\Gamma_h \rightarrow \Gamma$ is a 
bijection.
\item The following estimates hold
\begin{equation}\label{assum:geom}
\|\rho\|_{L^\infty(\Gamma_h)} \lesssim h^2, \qquad 
\|n - n_h \|_{L^\infty(\Gamma_h)} \lesssim h
\end{equation}
\end{itemize}
We introduce the following notation for the geometric entities involved 
in the mesh
\begin{align}
\mcT_h &= \{T\in \mcT_{h,0} : \overline{T}\cap \Gammah \neq \emptyset\}
\\
\mcF_h &= \{F = (\overline{T}_1 \cap \overline{T}_2)
\setminus 
\partial (\overline{T}_1 \cap \overline{T}_2) : T_1,T_2 \in \mcT_h\}
\\
\mcK_h &=\{K = T \cap \Gammah : T \in \mcT_h\}
\cup \{F \in \mcF_h: F \subset \Gamma_h\}
\\
\mcE_h &= \{E= \partial K_1 \cap \partial K_2 : K_1,K_2 \in \mcK_h\} 
\end{align}
We also use the notation $\omega^l = p(\omega) 
= \{p(x)\in \Gamma : x \in 
\omega \subset \Gamma_h\}$, in particular $\mcK_h^l = 
\{K^l : K\in \mcK_h^l\}$ is a partition of $\Gamma$.
We will use the notation $\|v \|_{\omega}$ for the $L^2$ norm over the set $\omega$.
\subsection{The Discrete Formulation}
Let $V_h$ be the space of continuous piecewise linear 
functions defined on $\mcT_h$.
To write the variational form of~\eqref{eq:conva} in the discrete spaces we
introduce finite dimensional approximations of the physical parameters
$\alpha$ and $\beta$, $\alpha_h$ and $\beta_h$. The assumptions made
on these approximations will be detailed later.
 
 The finite element method then
takes the form: find $u_h \in V_h$ such that 
\begin{equation}\label{eq:fem}
A_h(u_h,v) = L_h(v) \quad \forall v \in V_h
\end{equation}
The forms are defined by
\begin{equation}\label{eq:Ah}
A_h(v,w) = a_h(v,w) + s_{h,1}(v,w) + s_{h,2}(v,w)
\end{equation}
with 
\begin{align}\label{eq:ah-small}
a_h(v,w) &= (\beta_h \cdot\nablash v,w)_{\mcK_h} 
+ (\alpha_h v,w)_{\mcK_h} + \epsilon (\nablash v, \nablash w)_{\mcK_h}
\\ \label{eq:sh-one}
s_{h,1}(v,w) &= \tau_1 h (\beta_h \cdot\nablash v + \alpha_h v, 
\beta_h \cdot \nablash w)_{\mcK_h}
\\ \label{eq:sh-two}
s_{h,2} (v,w) &= \tau_2 h^\gamma (n_h\cdot \nabla v, n_h \cdot \nabla w )_{\mcTh}
\end{align}
where $\nablash v = \Psh \nabla v = (I - n_h\otimes n_h) \nabla v$ is the tangential gradient on $\Gammah$,  $\alpha_h$ and $\beta_h$ are discrete approximations of $\alpha$ and $\beta$, which satisfy Assumptions B and C below, and
\begin{equation}\label{eq:Lh}
L_h(v) = l_h(v) + l_{\shone}(v)=(f^e,v)_\mcKh + \tau_1 h (f^e,\beta_h \cdot \nablash v )_\mcKh
\end{equation}
The two terms $s_{h,1}$ and $s_{h,2}$ are least squares terms that we
add to stabilize the method. The streamline diffusion stabilization
$s_{h,1}$ includes the weighting parameter,
\begin{equation}\label{eq:tau1}
\tau_1=\left \{ 
\begin{tabular}{ll}
$c_\tau \beta_\infty^{-1}$ &  if $\beta_\infty h  \ge \epsilon$ (high Peclet number regime)  \\
$c_\tau h \epsilon^{-1}$ &  if $\beta_\infty h  \leq \epsilon$ (low Peclet number regime) 
\end{tabular}\right.
\end{equation}
where $c_\tau$ is positive constant, observe that this can be more
compactly written $\tau_1 = c_\tau \min(\beta_{\infty}^{-1},  h
\epsilon^{-1})$. The normal gradient stabilization term $s_{h,2}$
has two parameters: $\tau_2$ that may be chosen as $\tau_1^{-1}$ and the
power of $h$, where  $\gamma=1$, is a suitable choice for which we can
show optimal order a priori error estimates when the solution is smooth, also for vanishing diffusion, and optimal order condition number.  

\begin{rem}
From the stability estimate and consistency we have that $0\leq \gamma < 2$. 
More precisely,  the derivation of the coercivity of $A_h$ provides an upper bound on $\gamma$, guaranteeing that the stabilization is strong enough, and the consistency result provides the lower bound, guaranteeing that the stabilization is weak enough not to affect the optimal order of convergence (see the a priori estimate). The condition number estimate, see Theorem \ref{thm:condition-number}, however shows that the best choice is $\gamma=1$. This is the largest $\gamma$, and thus the weakest stabilization, that gives optimal order condition number.
\end{rem}

\begin{rem}\label{rem:fullgrad} In \cite{BuHaLaMaZa16} the so called 
full gradient stabilization was proposed and analyzed for the Laplace-Beltrami operator. Applying the same idea for the convection-diffusion problem we find that a suitable full gradient stabilization term takes the form 
\begin{equation}\label{eq:fullgrad-stab}
h(h+\epsilon) (\nabla v,\nabla w)_{\mcTh}
\end{equation}
where the powers of $h$ is given by the a priori error estimate. We note 
that the normal control provided by the full gradient stabilization term is weaker compared to the normal gradient due to the different $h$ scalings. To prove coercivity, at least using straight forward estimates, 
we will need to use the antisymmetric formulation of the convection term. See Remark \ref{rem:antisym} below. Note also that the $h$ scaling 
here is fixed while in the normal gradient stabilization we have 
some flexibility.
\end{rem}


\section{Preliminary Results}

\subsection{Extension and Lifting of Functions}

In this section we summarize basic results concerning 
extension and liftings of functions. We refer to 
\cite{BuHaLa15}, \cite{HaLaLa16}, and \cite{De09} for 
further details.
\paragraph{Extension.}
Recalling the definition $v^e = v \circ p$ of the extension 
and using the chain rule we obtain the identity 
\begin{equation}\label{eq:tanderext}
\nablash v^e = B^T \nablas v  
\end{equation}
where 
\begin{equation}\label{Bmap}
B = \Ps(I - \rho \kappa)\Psh : T_x(K)\rightarrow
T_{p(x)} (\Gamma)
\end{equation}
and $\kappa = \nabla \otimes \nabla \rho$ is the curvature tensor 
(or second fundamental form) which may be expressed in the form
\begin{equation}\label{Hform}
\kappa(x) = \sum_{i=1}^2 \frac{\kappa_i^e}{1 + \rho(x)\kappa_i^e} a_i^e \otimes a_i^e
\end{equation}
where $\kappa_i$ are the principal curvatures with corresponding
orthonormal principal curvature vectors $a_i$, see \cite[Lemma 14.7]{GiTr83}. We note that there is $\delta>0$ such that the uniform 
bound 
\begin{equation}
\|\kappa \|_{L^\infty(U_\delta(\Gamma))}\lesssim 1
\end{equation}
holds. Furthermore,  
$B:T_{x}(K) \rightarrow T_{p(x)} (\Gamma)$ is  invertible for 
$h \in (0,h_0]$ with $h_0$ small enough, i.e, there is
 $B^{-1}:   T_{p(x)} (\Gamma) \rightarrow  T_{x}(K)$ such that 
 \begin{equation}
 B B^{-1} = P\textcolor{red}{_\Gamma}, \qquad B^{-1} B = P\textcolor{red}{_{\Gamma_h}}
 \end{equation}
See \cite{HaLaLa16} for further details.

\paragraph{Lifting.}
The lifting $w^l$ of a function $w$ defined on $\Gamma_h$ 
to $\Gamma$ is defined as the push forward
\begin{equation}
(w^l)^e = w^l \circ p = w \quad \text{on $\Gamma_h$}
\end{equation}
For the derivative it follows that 
\begin{equation}
\nablash w = \nablash (w^l)^e = B^T \nablas (w^l) 
\end{equation}
and thus 
\begin{equation}\label{eq:tanderlift}
 \nablas (w^l) = B^{-T} \nablash w 
\end{equation}

\paragraph{Estimates Related to $\boldsymbol{B}$.}
Using the uniform bound  $\|\kappa\|_{U_{\delta_0}(\Gamma)} \lesssim 1$ and 
the bound $\|\rho\|_{L^\infty(\Gammah)} \lesssim h^{2}$ from the geometry 
approximation assumption it follows that
\begin{alignat}{2}
\label{B-uniform-bounds}
  \| B \|_{L^\infty(\Gamma_h)} &\lesssim 1,
 \qquad &\| B^{-1} \|_{L^\infty(\Gamma)} 
 &\lesssim 1
\\
\label{eq:B-PPh}
\| \Ps\Psh - B \|_{L^\infty(\Gamma)} 
 &\lesssim h^{2}, 
 \qquad\quad
 &\| \Psh\Ps -  B^{-1} \|_{L^\infty(\Gamma_h)} 
 &\lesssim h^{2}
\end{alignat}
For the surface measures on $\Gamma$ and $\Gammah$ 
we have the identity  
\begin{equation}\label{eq:measure}
d \Gamma = |B| d \Gammah
\end{equation}
where $|B| =| \text{det}(B)|$ is the absolute value 
of the determinant of $B$ and we have the following 
estimates
\begin{equation}\label{eq:B-detbound}
\left\| 1 - |B| \right\|_{L^\infty(\Gamma_h)} \lesssim h^{2}, 
\quad \left\| 1 - |B^{-1}| \right\|_{L^\infty(\Gamma_h)} \lesssim h^{2}, \quad\left\| |B| \right\|_{L^\infty(\Gamma_h)} \lesssim 1,
\quad \left\| |B|^{-1} \right\|_{L^\infty(\Gamma_h)} \lesssim 1
\end{equation}

\paragraph{Norm Equivalences.} We have
\begin{align}
\label{eq:normequ}
\| v \|_{L^2(\Gamma)}
\sim \| v \|_{L^2(\Gammah)} \qquad \text{and} \qquad
\| \nablas v \|_{L^2(\Gamma)} &\sim
\| \nablash v \|_{L^2(\Gammah)}
\end{align}

\subsection{Assumptions}
We make the following assumptions about the approximations $\alpha_h$ and $\beta_h$:
\paragraph{Assumption B.} 
\begin{itemize} 
\item There is a constant $\alpha_{h_0,0}$ such that 
for all $h\in (0,h_0]$,
\begin{equation}\label{eq:assum-ah}
0< \alpha_{h_0,0} \leq \alpha_h - \frac{1}{2}\divsh \beta_h 
\end{equation} 
\item There is a constant such that for all $h\in (0,h_0]$,
\begin{equation}\label{eq:assum-jump-betah}
\| [\nu_h \cdot \beta_h ] \|_{L^\infty(\mcEh)} \lesssim h^{2}
\end{equation}
\end{itemize}
Here
\begin{equation}
[\nu_h \cdot \beta_h ] 
= \nu_{h,K_1} \cdot  \beta_h + \nu_{h,K_2} \cdot  \beta_h
\end{equation}
where $\nu_{h,K_i}$ denotes the unit vector orthogonal to the edge $E \in \mcEh$ shared by the elements $K_1$ and $K_2$, tangent and exterior to $K_i$, $i=1,2$.

\paragraph{Assumption C.} \textcolor{black}{$\beta_h$ and $\alpha_h$ are
$L^\infty$-stable discrete approximations of $\beta^e$ and $\alpha^e$
in $V^h$, sucht that $\beta_h = P_{\Gamma_h} \beta_h$.}  There are constants such that for all 
$h\in (0,h_0]$,
\begin{align}\label{eq:assum-coefficients-accuracy}
\| \beta - |B|^{-1} B \beta_h\textcolor{red}{^l} \|_{L^\infty(\Gamma)} \lesssim h^2, 
\qquad 
\|\alpha - |B|^{-1}  \alpha_h^l \|_{L^\infty(\Gamma)} \lesssim h^2
\end{align}
\begin{rem} Using the bounds (\ref{eq:B-detbound}), (\ref{eq:B-PPh}) and the identity 
(\ref{Bmap}) we note that
\begin{equation}
\| \beta - |B|^{-1} B \beta_h^l \|_{L^\infty(\Gamma)}  
\lesssim
\| \beta -  P_\Gamma \beta_h^l \|_{L^\infty(\Gamma)} + O(h^2) 
\end{equation}
and using (\ref{eq:B-detbound}) we have
\begin{equation}
\|\alpha - |B|^{-1}  \alpha_h^l \|_{L^\infty(\Gamma)} \lesssim
\|\alpha -  \alpha_h^l \|_{L^\infty(\Gamma)} + O(h^2)
\end{equation}
Thus we conclude that (\ref{eq:assum-coefficients-accuracy}) is 
equivalent to the simplified assumptions
\begin{equation}\label{eq:assum-coefficients-accuracy-simplified}
\| \beta^e -  P_\Gamma \beta_h \|_{L^\infty(\Gamma)}  
\lesssim h^2, 
\qquad
\|\alpha -  \alpha_h^l \|_{L^\infty(\Gamma)} 
\lesssim h^2
\end{equation}
\end{rem}
For a detailed discussion on how to construct $\alpha_h$ and $\beta_h$
with the above properties under the regularity assumption $\alpha \in
C^2(\Gamma)$ and $\beta \in [C^2(\Gamma)]^2$ see \cite{BuHaLaZa15}.
\subsection{Inequalities for stability and approximation} 
Here we will recall two inequalities that are useful in the analysis
of the stabilized method. The first that was originally introduced in
\cite{BuHaLaMa16} is a Poincar\'e type inequality showing that the
$L^2$-norm of the finite element solution in the bulk can be
controlled by the $L^2$-norm over the discrete surface plus the
normal component of the bulk gradient, scaled with $h$. The second is
a trace inequality showing that the scaled $L^2$-norm of the finite element
solution over the edges of the tesselation of the discrete surface can
be bounded by the $L^2$-norm over the surface, plus the scaled normal
stabilization term.
\begin{lem}\label{lem:poincare-normalstab}
There is a constant such that for all $v\in V_h$,
\begin{equation}\label{eq:poincare-normalstab}
\| v \|^2_{\mcTh} 
\lesssim 
h \| v \|^2_{\mcKh} 
+ 
h^2 \| n_h \cdot \nabla v \|^2_{\mcTh} 
\lesssim 
h \| v \|^2_{\mcKh} +  \gamma_2h^{2} \| v \|^2_{s_{h,2}}
\end{equation}
\end{lem}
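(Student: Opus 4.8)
The statement chains two inequalities; the second is essentially bookkeeping while the first carries all the geometric content. The plan is to establish the first inequality $\|v\|^2_{\mcTh} \lesssim h\|v\|^2_{\mcKh} + h^2\|n_h\cdot\nabla v\|^2_{\mcTh}$, which is the estimate originating in \cite{BuHaLaMa16}, and then to obtain the second simply by unfolding the definition of the stabilization seminorm.

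For the second inequality, I would recall that $\|v\|^2_{s_{h,2}} = s_{h,2}(v,v) = \tau_2 h^\gamma \|n_h\cdot\nabla v\|^2_{\mcTh}$. Hence $h^2\|n_h\cdot\nabla v\|^2_{\mcTh} = \tau_2^{-1}h^{2-\gamma}\,\|v\|^2_{s_{h,2}}$, so with the constant $\gamma_2 = (\tau_2 h^\gamma)^{-1}$ the second $\lesssim$ is in fact an identity on the normal-gradient contribution and a trivial pass-through on the term $h\|v\|^2_{\mcKh}$. No further work is needed there.

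For the first inequality I would argue via a normal-tube parametrization rather than element by element. Since every $T\in\mcTh$ meets $\Gamma_h$ and $\mathrm{diam}(T)\lesssim h$, the active bulk $\mcTh$ lies in an $O(h)$-neighborhood of the connected discrete surface $\Gamma_h$. I would cover (almost all of) this neighborhood by the flow-out of $\Gamma_h$ along the piecewise-constant normal field $n_h$, writing a generic bulk point as $x = y + t\,n_h(y)$ with $y\in\Gamma_h$ and $|t|\lesssim h$, a map whose Jacobian is uniformly bounded above and below. The fundamental theorem of calculus along such a line gives $v(x) = v(y) + \int_0^t n_h\cdot\nabla v\,\mathrm{d}s$; squaring, applying Cauchy--Schwarz to the length-$O(h)$ integral, and changing variables back to $(y,t)$ then yields $\|v\|^2_{\mcTh} \lesssim h\|v\|^2_{\Gamma_h} + h^2\|n_h\cdot\nabla v\|^2_{\mcTh}$, the extra power of $h$ in the first term being produced by integration over the $O(h)$ range of $t$. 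Since $\|v\|_{\Gamma_h}$ and $\|v\|_{\mcKh}$ coincide, this is the claim.

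The main obstacle is precisely the construction and uniform control of this normal flow-out across the piecewise-flat structure of $\Gamma_h$, namely at the edges $\mcEh$ where $n_h$ jumps, together with the verification that every point of $\mcTh$ is reached from $\Gamma_h$ by such a short line. This is what makes a purely element-local proof fail: in an element whose cut $K = T\cap\Gamma_h$ is a tiny sliver, the values of $v$ on that single $K$ cannot control $\|v\|_T$, since the tangential gradient of the affine function $v|_T$ is invisible to the normal-gradient term $\|n_h\cdot\nabla v\|_T$. The global parametrization circumvents this because points of such a small-cut element are swept by normal lines emanating from the larger cuts of neighbouring elements; the connectedness of $\Gamma_h$ together with the continuity of $v\in V_h$ is what ties the bulk value to the \emph{full} surface norm $\|v\|_{\mcKh}$ rather than to one degenerate piece. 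Making this rigorous is the content of \cite{BuHaLaMa16}, which I would invoke for the first inequality, after which the trivial second inequality above completes the proof.
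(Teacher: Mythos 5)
Your proposal takes essentially the same route as the paper: the paper's entire proof consists of the citation to \cite{BuHaLaMa16}, Proposition 8.8, for the first (and only substantive) inequality, which you also ultimately invoke, while the second inequality is, as you say, pure bookkeeping with the definition of $s_{h,2}$ (the paper's own notation for $\|\cdot\|_{s_{h,2}}$ and the undefined factor $\gamma_2$ is ambiguous, but any reading makes this step trivial). Your sketch of the normal flow-out argument, and in particular your observation that an element-local argument fails on small-cut elements so that a global covering by normal lines is needed, is a faithful account of the idea behind the cited result, so the proposal is correct.
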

\begin{proof}
See \cite{BuHaLaMa16} Proposition 8.8.
\end{proof}

\begin{lem} \label{lem:inverseedge}
There is a constant such that for all $v\in V_h$,
\begin{equation}\label{eq:inverseedge}
h \| v \|^2_\mcEh \lesssim \| v \|^2_\mcKh +   \tau_2h\| v \|^2_{s_{h,2}}
\end{equation} 
There is a constant such that for all $v \in V_h + H^2(\mathcal{T}_h)$,
\begin{equation}\label{eq:inverseedge2}
h \| v \|_\mcEh \lesssim \| v \|_\mcTh +   h\| \nabla v \|_\mcTh + h^2
|v|_{H^2(\mcTh)}
\end{equation}
where $|\cdot|_{H^2(\mcTh)}$ denotes the broken (or elementwise) $H^2$-seminorm over $\mcTh$.
\end{lem}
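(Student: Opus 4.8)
The plan is to establish \eqref{eq:inverseedge2} first, as the general statement for $v \in V_h + H^2(\mcTh)$, and then to deduce \eqref{eq:inverseedge} from it by specializing to $V_h$ and invoking Lemma~\ref{lem:poincare-normalstab}. The geometric observation driving everything is that each edge $E \in \mcEh$ is the intersection of $\Gammah$ with a face $F \in \mcFh$ of the background mesh, and $F$ is itself a face of some tetrahedron $T \in \mcTh$; hence $E$ lies in codimension two relative to $T$, and the natural tool is a twofold application of the standard elementwise trace inequality. First I would pass from the tetrahedron $T$ to its full triangular face $F$ via $\|v\|_F^2 \lesssim h^{-1}\|v\|_T^2 + h\|\nabla v\|_T^2$, applied both to $v$ and, componentwise, to $\nabla v$; then from $F$ to the chord $E$ via $\|v\|_E^2 \lesssim h^{-1}\|v\|_F^2 + h\|\nabla v\|_F^2$. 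Chaining the two and collecting powers of $h$ gives, on each element,
\begin{equation*}
h^2\|v\|_E^2 \lesssim \|v\|_T^2 + h^2\|\nabla v\|_T^2 + h^4|v|_{H^2(T)}^2,
\end{equation*}
and summing over all edges and elements, with the finite overlap guaranteed by shape regularity, and taking square roots yields \eqref{eq:inverseedge2}.

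To obtain \eqref{eq:inverseedge} I would specialize \eqref{eq:inverseedge2} to $v \in V_h$, for which the broken $H^2$-seminorm vanishes. Squaring and applying the inverse estimate $h^2\|\nabla v\|^2_\mcTh \lesssim \|v\|^2_\mcTh$, valid since $v$ is piecewise linear, collapses the right-hand side to $h\|v\|^2_\mcEh \lesssim h^{-1}\|v\|^2_\mcTh$. Invoking Lemma~\ref{lem:poincare-normalstab} to control $\|v\|^2_\mcTh$ by $h\|v\|^2_\mcKh + h^2\|n_h\cdot\nabla v\|^2_\mcTh$ then produces $h\|v\|^2_\mcEh \lesssim \|v\|^2_\mcKh + h\|n_h\cdot\nabla v\|^2_\mcTh$, and identifying the residual normal-gradient term with the stabilization seminorm $\|v\|^2_{s_{h,2}}$, with the prefactor already recorded in \eqref{eq:poincare-normalstab}, gives the asserted bound with right-hand side $\|v\|^2_\mcKh + \tau_2 h\|v\|^2_{s_{h,2}}$.

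The delicate point, and the step I expect to be the main obstacle, is the second trace inequality, from the full face $F$ to the chord $E = F \cap \Gammah$: since $\Gammah$ cuts the background mesh in an uncontrolled manner, $E$ may sit in an arbitrary position inside $F$, and a naive constant could depend on that position. The key is that the full-gradient trace estimate from a shape-regular simplex to an \emph{arbitrary} hyperplane section holds with a constant depending only on shape regularity and not on the cut; this is seen by integrating $v^2$ along the direction transverse to $E$ over a strip of width $\sim h$ contained in $F$, choosing the larger of the two sides of $E$, so that no pinching of $E$ toward a vertex or edge of $F$ can degrade the bound. For the piecewise linear case needed in \eqref{eq:inverseedge} one may sidestep this robustness issue altogether: bounding $\|v\|_E \lesssim h^{1/2}\|v\|_{L^\infty(T)} \lesssim h^{-1}\|v\|_T$ through the elementwise $L^\infty$-inverse inequality is manifestly independent of the cut, and furnishes a self-contained proof of \eqref{eq:inverseedge} that does not pass through \eqref{eq:inverseedge2} at all.
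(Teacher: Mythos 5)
Your proposal is correct and takes essentially the same approach as the paper: \eqref{eq:inverseedge2} via two applications of the standard scaled trace inequality (chord to face to tetrahedron), and \eqref{eq:inverseedge} by reducing the edge norm to the bulk norm $h^{-1}\| v \|^2_{\mcTh}$ and then invoking Lemma~\ref{lem:poincare-normalstab}. The only organizational difference is that the paper asserts the edge-to-element inverse estimate $h \| v \|^2_{\mcEh} \lesssim h^{-1} \| v \|^2_{\mcTh}$ in one line, whereas you derive it (either by specializing \eqref{eq:inverseedge2} to piecewise linears or via the $L^\infty$ inverse bound), and your discussion of the cut-position independence of the trace constant makes explicit a point the paper leaves implicit.
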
 
\begin{proof}
Using an inverse estimate followed by (\ref{eq:poincare-normalstab}) 
we obtain
\begin{align}
h \| v \|^2_\mcEh &\lesssim h^{-1} \|v \|^2_\mcTh 
\lesssim \| v \|^2_\mcKh + \tau_2 h \| v \|^2_{s_{h,2}}
\end{align}
The inequality \eqref{eq:inverseedge2} follows by applying the standard trace inequality
\[
\|v\|_F \leq C_T( h^{-\frac12} \|v\|_K + h^{\frac12} \|\nabla v\|_K)
\]
twice, for each edge in $\mcEh$, once to pass to a face in the bulk
mesh and a second time to pass to an element in the bulk mesh.
\end{proof}

\subsubsection{Interpolation error estimates} 
For the error analysis in the next section we will use the following
mesh-dependent norm:
\begin{equation}\label{eq:meshnorm}
\tn v \tn^2_h 
= 
\| v \|^2_\mcKh + \epsilon \|\nablash v \|^2_\mcKh 
+ \tau_1  h \|\beta_h \cdot \nablash v \|^2_\mcKh
+ \tau_2 h^{\gamma} \| v \|^2_{\shtwo}
\end{equation}
where
\begin{equation}
 \| v \|^2_\shtwo = (n_h\cdot \nabla v, n_h \cdot \nabla w )_{\mcTh}
\end{equation} 
Let 
$\pi_h: L^2(\mcTh)\rightarrow V_h$ be the Scott-Zhang 
interpolant. Using the stability 
\begin{equation}
\| v^e \|_{H^s(U_{\delta_0}(\Gamma))}
\lesssim \delta^{1/2} \| v \|_{H^s(\Gamma)}
\end{equation}
of the extension we obtain the interpolation error estimate 
\begin{equation}\label{eq:interpol-basic}
\| u - (\pi_h u^e)^l \|_{H^m(\Gamma)} \sim \| u^e - \pi_h u^e \|_{H^m(\Gammah)} 
\lesssim 
h^{s-m} \|u \|_{H^s(\Gamma)}\quad m \in \{0,1\}, m\leq s \leq 2
\end{equation}
Furthermore, we have the energy norm estimate 
\begin{equation}\label{eq:interpol-energy}
 \tn u^e - \pi_h u^e \tn_h 
\lesssim \max(\beta_\infty^{\frac12} h^{\frac32}, \epsilon^{\frac12} h) \|u \|_{H^2(\Gamma)} 
\end{equation}
Considering the definition of \eqref{eq:meshnorm} we see that the
bound for the first two terms is an immediate consequence of
\eqref{eq:interpol-basic}. For the two last terms of
\eqref{eq:meshnorm}, that are related to the stabilization, we see that
using \eqref{eq:interpol-basic} and the definition of $\tau_1$
we have
\begin{multline*}
\tau_1^{\frac12}  h^{\frac12}  \|\beta_h \cdot \nablash (u^e - \pi_h u^e ) \|_\mcKh \leq
C \tau_1^{\frac12} \beta_\infty h^{\frac32}  \|u \|_{H^2(\Gamma)} \leq C
\min(\beta_\infty^{-\frac12}, h^{\frac12} \epsilon^{-\frac12})
\beta_\infty  h^{\frac32}  \|u \|_{H^2(\Gamma)} \\
\leq C \beta_\infty^{\frac12} h^{\frac32}  \|u \|_{H^2(\Gamma)} 
\end{multline*}
where we used that $\min(\beta_\infty^{-\frac12}, h^{\frac12}
\epsilon^{-\frac12}) \beta_\infty^{\frac12} \leq 1$ in the last
inequality. Using  the definition of the normal stabilization we also
get
\[
\tau^{\frac12}_2 h^{\frac{\gamma}{2}} \| u^e - \pi_h u^e\|_{\shtwo}
\leq \tau^{\frac12}_2 h^{\frac{\gamma}{2}+1}  \|u \|_{H^2(\Gamma)}.
\]
We obtain the desired result for $\gamma \ge 1$ and 
$$
\tau_2 \sim \max(\beta^{\frac12}_\infty, \epsilon h^{-1}) \sim \tau_1^{-1}
$$
\section{Apriori Error Estimates}
In this section we will prove the main result of this paper: an
optimal error estimate in the streamline derivative norm and an
estimate that is suboptimal with $O(h^{\frac12})$ for the error in the
$L^2$-norm. To give some structure to this result we first prove
coercivity, which also establishes the existence of the discrete
solution, then continuity and finally estimates of the geometrical
error and consistency.

\subsection{Coercivity} 
Compared to a standard coercivity result for a problem set in the flat
domain we must here control the terms appearing due to jumps in the discrete
approximation of $\beta$ over element faces. To obtain this control we
need to use equation
\eqref{eq:assum-jump-betah} of Assumption B and the normal grandient stabilization. 
\begin{lem} \label{lem:coercivity} For all $v\in V_h$ 
and $h \in(0,h_0]$ we have
\begin{equation}\label{eq:coercivity}
\min\Big(1, \alpha_{h_0,0}-Ch_0, 1-Dh_0^{2-\gamma}\Big) \tn v \tn_h^2 \lesssim A_h(v,v) \qquad \forall v \in V_h 
\end{equation}
where $C$ and $D$ are positive constants. For $h_0$ small enough and $\gamma <2$ we have that the constant $c_{coer}=\min\Big(1, \alpha_{h_0,0}-Ch_0, 1-Dh_0^{2-\gamma}\Big) >0$.
\end{lem}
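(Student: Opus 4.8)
The plan is to test the form against $v$ itself and bound $A_h(v,v)$ from below term by term. Three of the four contributions to $\tn v\tn_h^2$ appear directly: the diffusion part of $a_h$ gives $\epsilon\|\nablash v\|^2_\mcKh$, the leading part of $\shone$ gives $\tau_1 h\|\beta_h\cdot\nablash v\|^2_\mcKh$, and $\shtwo$ gives $\tau_2 h^\gamma\|v\|^2_\shtwo$. The remaining work is (i) to recover the reaction term $\|v\|^2_\mcKh$ from the convection and zeroth-order parts of $a_h$, and (ii) to show that the two genuinely sign-indefinite contributions --- the edge jump generated by integrating the convection term by parts, and the cross term hidden in $\shone$ --- can be absorbed, up to factors vanishing with $h_0$, into the positive terms already collected.

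First I would treat the convection term $(\beta_h\cdot\nablash v,v)_\mcKh = \tfrac12(\beta_h\cdot\nablash(v^2),1)_\mcKh$. Since each $K\in\mcKh$ is flat and $\beta_h$ is tangent to $\Gammah$ (Assumption C, $\beta_h=\Psh\beta_h$), the surface Green's formula on each element carries no curvature contribution and yields $-\tfrac12(\divsh\beta_h\,v,v)_\mcKh$ together with a sum of boundary integrals over $\partial K$. Because $v\in V_h$ is continuous, $v^2$ is single-valued across each interior edge, so these integrals collapse to $\tfrac12\int_\mcEh[\nu_h\cdot\beta_h]\,v^2$ with the co-normal jump of Assumption B. Combining the divergence piece with $(\alpha_h v,v)_\mcKh$ and invoking \eqref{eq:assum-ah} gives the coercive lower bound $((\alpha_h-\tfrac12\divsh\beta_h)v,v)_\mcKh\ge\alpha_{h_0,0}\|v\|^2_\mcKh$.

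The crux is the edge term. Using \eqref{eq:assum-jump-betah} I would bound $\big|\tfrac12\int_\mcEh[\nu_h\cdot\beta_h]v^2\big|\lesssim h^2\|v\|^2_\mcEh$, and then apply the trace inequality \eqref{eq:inverseedge} of Lemma \ref{lem:inverseedge} in the form $h\|v\|^2_\mcEh\lesssim\|v\|^2_\mcKh+\tau_2 h\|v\|^2_\shtwo$. Multiplying by $h$ gives $h^2\|v\|^2_\mcEh\lesssim h\|v\|^2_\mcKh+h^{2-\gamma}\big(\tau_2 h^\gamma\|v\|^2_\shtwo\big)$, so the edge term is controlled by $Ch\,\|v\|^2_\mcKh$ plus $Dh^{2-\gamma}$ times the normal-stabilization part of the norm. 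This is exactly where the normal gradient stabilization earns its place, and where the restriction $\gamma<2$ enters: the factor $h^{2-\gamma}$ must be made small by taking $h_0$ small, which is only possible when $\gamma<2$. For the cross term $\tau_1 h(\alpha_h v,\beta_h\cdot\nablash v)_\mcKh$ in $\shone(v,v)$ I would use Young's inequality to split it into a fraction of $\tau_1 h\|\beta_h\cdot\nablash v\|^2_\mcKh$, absorbed into the streamline-diffusion term at the cost of halving its coefficient, and a remainder bounded by $\tau_1 h\|\alpha_h v\|^2_\mcKh\lesssim h\|v\|^2_\mcKh$, using $\alpha_h=O(1)$ and $\tau_1\lesssim\beta_\infty^{-1}=O(1)$.

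Collecting everything, the coefficient of $\|v\|^2_\mcKh$ is $\alpha_{h_0,0}-Ch_0$, the coefficient of the $\shtwo$ term is $1-Dh_0^{2-\gamma}$, and the diffusion and streamline-diffusion coefficients are bounded below by positive absolute constants; taking the overall minimum produces the stated constant $c_{coer}$, with the remaining positive factors hidden in the $\lesssim$. Choosing $h_0$ small enough makes both $\alpha_{h_0,0}-Ch_0$ and, since $\gamma<2$, $1-Dh_0^{2-\gamma}$ strictly positive, so $c_{coer}>0$. I expect the main obstacle to be the careful bookkeeping of the edge term: one must verify that the curvature contribution in the elementwise Green's formula genuinely drops out on flat elements, that the interface jump is correctly expressed through $[\nu_h\cdot\beta_h]$, and that the combination of \eqref{eq:assum-jump-betah} with \eqref{eq:inverseedge} leaves precisely a harmless $h^{2-\gamma}$ weight on the normal stabilization term.
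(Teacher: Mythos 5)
Your proof is correct and follows essentially the same route as the paper's: elementwise partial integration of the convection term to expose $((\alpha_h-\tfrac12\divsh\beta_h)v,v)_\mcKh$ plus the edge jump, control of that jump via Assumption B combined with Lemma \ref{lem:inverseedge} (producing exactly the $Ch$ loss on $\|v\|^2_\mcKh$ and the $Dh^{2-\gamma}$ loss on the normal stabilization term), and Young's inequality for the cross term in $\shone$. There are no gaps; your argument matches the paper's proof step for step, and is in fact slightly more careful in spelling out why the elementwise Green's formula yields the conormal jump terms.
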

\begin{proof} We have 
\begin{equation}
A_h(v,v) = a_h(v,v) + s_{h,1}(v,v) + s_{h,2}(v,v) = I + II + III
\end{equation}
\paragraph{Term $\bfI$.} Using Assumptions (\ref{eq:assum-ah}) and 
(\ref{eq:assum-jump-betah}) we obtain
\begin{align}
a_h(v,v) &= (\beta_h \cdot \nablash v,v)_\mcKh 
+ (\alpha_h v,v )_\mcKh 
+ \epsilon(\nabla v,\nabla w)_\mcKh  
\\
&=\underbrace{((\alpha_h - \frac{1}{2}\divsh \beta_h) v,v)_\mcKh}_{
\geq \alpha_{h_0,0} \| v \|_\mcKh^2 } 
+ 
\underbrace{([\nu_h \cdot \beta_h] v,v )_\mcEh}_{\geq - \Big( C_0 h \|
  v \|^2_\mcKh + C_1 \tau_2h^{2} \| v \|^2_{s_{h,2}}\Big)}
+ \epsilon \|\nablash v \|^2_\mcKh  
\end{align}
where we used (\ref{eq:inverseedge}) to conclude that 
\begin{align}
([\nu_h \cdot \beta_h] v,v )_\mcEh 
&\lesssim 
h^2 \| v \|^2_\mcEh 
\leq
C_0 h \| v \|^2_\mcKh +C_1 \tau_2 h^{2} \|v\|^2_{s_{h,2}}  
\end{align}
Observe that the constant $C_0 \sim |\beta|_{W^2,\infty(\Gamma)}$ and
$C_1 \sim  |\beta|_{W^2,\infty(\Gamma)}/\tau_2$
\paragraph{Term $\bfI \bfI$.} Expanding and estimating the 
second term using the Cauchy-Schwarz inequality and the bound 
$2ab \leq a^2 + b^2$ we obtain
\begin{align}
s_{h,1}(v,v) &= \tau_1 h (\nablashb v + \alpha_h v, \nablashb v )_\mcKh  
\\
&\geq \tau_1 h \| \nablashb v \|^2_\mcKh  
- \tau_1  h \| \alpha_h v \|_\mcKh
\| \nablashb v \|_\mcKh  
\\
&\geq \frac{\tau_1 }{2} h \| \nablashb v \|^2_\mcKh  
- \frac{\tau_1 }{2}  h \|\alpha_h \|^2_{L^\infty(\mcKh)} \| v \|^2_\mcKh
\end{align}

\paragraph{Term $\bfI \bfI \bfI$.} We directly have 
\begin{equation}
s_{h,2}(v,v) =\tau_2h^\gamma \| v \|^2_\shtwo
\end{equation}

\paragraph{Conclusion.}
Collecting the estimates we obtain
\begin{align}
A_h(v,v)&\geq \Big(\alpha_{h_0,0} - C_0 h - h \frac{\tau_1}{2}\|\alpha_h \|^2_{L^\infty(\mcKh)}\Big) \| v \|^2_\mcKh 
\\ \nonumber
& \qquad + \epsilon \| \nablash v \|^2_\mcKh 
\\ \nonumber
& \qquad + \frac{\tau_1}{2} h \| \nablashb v \|^2_\mcKh  
\\
&\qquad + \Big(1 - C_1h^{2-\gamma} \Big)
\tau_2h^\gamma\| v \|^2_\shtwo  
\\
&\gtrsim 
\min\Big(1, \alpha_{h_0,0}-Ch_0, 1-Dh_0^{2-\gamma}\Big)
\tn v \tn^2_h 
\end{align}
for $h \in (0,h_0]$ with $h_0$ small enough.
\end{proof}
\begin{rem}
Note that from the coercivity proof we see that the condition
$\gamma<2$ is necessary in order to be able to control instabilities
due to the approximated transport velocity field. We also remark that
the smallness assumption on $h$  can be related to the physical parameters hidden in the constants
$C_0$ and $C_1$. If $\gamma=1$, the conditions that must be satisfied
is that $ |\beta|_{W^2,\infty(\Gamma)}
\beta_{\infty}^{-1} h$ is small for convection domainted flows and $ |\beta|_{W^2,\infty(\Gamma)}
h^2 \epsilon^{-1}$ is small for diffusion dominated flow.
 \end{rem}
\begin{rem}\label{rem:antisym} Note that instead starting from the antisymmetric 
discretization of the convection term 
\begin{equation}
\frac{1}{2}\Big( (\nablashb v, w)_\mcKh - (v,\nablashb w)_\mcKh\Big)
- \frac{1}{2}((\divsh \beta_h) v,w)_\mcKh
\end{equation}
we do not have to use partial integration in Term $I$, which 
simplifies the argument since we immediately obtain
\begin{align}
A_h(v,v) &= ((\alpha_h - \frac{1}{2}\divsh \beta_h) v,v)_\mcKh 
+\epsilon (\nablash v, \nablash v )_\mcKh 
+s_{h,1}(v,v) + s_{h,s}(v,v) 
\end{align}
We note that \cite{OlReXu14b} uses the skew symmetric form 
and may thus establish coercivity without using the stabilization term $s_{h,2}$.
With stabilization we find that we may use the  standard or the antisymmetric formulation of the convection term. However, partial integration must still be used to prove optimal a priori error estimate, see the proof of the continuity result in the next section. 
\end{rem} 

\subsection{Continuity}
We now prove a continuity result. 
\begin{lem} \label{lem:continuity}There is a constant such that for
  all $\eta \in V_h + H^2(\mcTh)$, $v\in V_h$, 
we have
\begin{equation}\label{eq:continuity}
A_h (\eta,v) \lesssim  \tau_1^{-1/2} h^{-1/2} \| \eta \|_\mcKh \tn v
\tn_h \\
+(\tn \eta \tn_h +C_\beta \max(1, h^\frac{2-\gamma}{2})\|\eta\|_*) \tn v \tn_h
\end{equation}
where $\|\eta\|_* =  h^{\frac12} \Big(\|\eta \|_\mcTh + h\| \nabla \eta \|_\mcTh + h^2|\eta|_{H^2(\mcTh)} \Big)$
\end{lem}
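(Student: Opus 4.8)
The plan is to estimate $A_h(\eta,v) = a_h(\eta,v) + s_{h,1}(\eta,v) + s_{h,2}(\eta,v)$ term by term against the mesh norm \eqref{eq:meshnorm}, isolating the convection term $(\nablashb \eta, v)_\mcKh$ as the only contribution that is not immediately controlled by $\tn\eta\tn_h\,\tn v\tn_h$. For the reaction term I would use $(\alpha_h\eta,v)_\mcKh \lesssim \|\eta\|_\mcKh\|v\|_\mcKh \leq \tn\eta\tn_h\,\tn v\tn_h$, and for the diffusion term $\epsilon(\nablash\eta,\nablash v)_\mcKh \leq \big(\epsilon^{1/2}\|\nablash\eta\|_\mcKh\big)\big(\epsilon^{1/2}\|\nablash v\|_\mcKh\big) \leq \tn\eta\tn_h\,\tn v\tn_h$. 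The two stabilizations are handled by Cauchy--Schwarz and the very definition of $\tn\cdot\tn_h$: in $s_{h,1}$ the weight $(\tau_1 h)^{1/2}$ on the test direction $\nablashb v$ is exactly the one appearing in the mesh norm, while on the $\eta$-side the reaction part $(\tau_1 h)^{1/2}\|\alpha_h\eta\|_\mcKh$ is absorbed using the elementary bound $(\tau_1 h)^{1/2}\lesssim 1$ (which follows from $\tau_1 = c_\tau\min(\beta_\infty^{-1}, h\epsilon^{-1})$ and $\beta_\infty = O(1)$). Similarly $s_{h,2}(\eta,v) \leq (\tau_2 h^\gamma)^{1/2}\|\eta\|_\shtwo\,(\tau_2 h^\gamma)^{1/2}\|v\|_\shtwo \leq \tn\eta\tn_h\,\tn v\tn_h$. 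All of these feed into the $\tn\eta\tn_h\,\tn v\tn_h$ part of the right-hand side.

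For the convection term I would integrate by parts elementwise over the surface tessellation $\mcKh$. Since $\beta_h$ is tangent to $\Gammah$ by Assumption~C, surface integration by parts on each $K$ is legitimate and gives
\begin{equation*}
(\nablashb\eta,v)_\mcKh = -(\eta,\nablashb v)_\mcKh - ((\divsh\beta_h)\eta,v)_\mcKh + \sum_{E\in\mcEh}\int_E [\nu_h\cdot\beta_h]\,\eta v\,\dsh,
\end{equation*}
where no external boundary contribution arises because $\Gamma$, and hence $\Gammah$, is closed, and where for continuous $\eta$ (and $v\in V_h$) the two element traces on each interior edge combine into the normal-flux jump $[\nu_h\cdot\beta_h]$ of Assumption~B. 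The first volume term is the origin of the first summand on the right-hand side: writing $\|\nablashb v\|_\mcKh = (\tau_1 h)^{-1/2}(\tau_1 h)^{1/2}\|\nablashb v\|_\mcKh \leq \tau_1^{-1/2}h^{-1/2}\tn v\tn_h$ yields $(\eta,\nablashb v)_\mcKh \leq \tau_1^{-1/2}h^{-1/2}\|\eta\|_\mcKh\,\tn v\tn_h$. The divergence term is controlled by $\|\divsh\beta_h\|_{L^\infty} = O(1)$ and so contributes $\lesssim\tn\eta\tn_h\,\tn v\tn_h$.

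The main obstacle is the interior-edge term, and this is precisely where Assumption~B and the trace inequalities of Lemma~\ref{lem:inverseedge} enter. I would first use the $O(h^2)$ jump bound \eqref{eq:assum-jump-betah} together with Cauchy--Schwarz over the edges to obtain $\sum_{E}\int_E[\nu_h\cdot\beta_h]\,\eta v\,\dsh \lesssim h^2\|\eta\|_\mcEh\|v\|_\mcEh$, and then split the two edge factors so that the available powers of $h$ are matched to the appropriate trace inequality. For the rough factor $\eta\in V_h + H^2(\mcTh)$ I would apply the bulk trace inequality \eqref{eq:inverseedge2}, which by the definition of $\|\cdot\|_*$ gives $h^{3/2}\|\eta\|_\mcEh \lesssim \|\eta\|_*$; for the discrete factor $v\in V_h$ I would apply \eqref{eq:inverseedge}, controlling $h^{1/2}\|v\|_\mcEh$ by $\|v\|_\mcKh$ together with the normal-gradient stabilization. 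The remaining subtlety is that the edge trace inequality is scaled with one power of $h$ whereas the normal-gradient contribution inside $\tn\cdot\tn_h$ carries $h^\gamma$; reconciling these two scalings is what produces the factor $\max(1, h^{(2-\gamma)/2})$ multiplying $\|\eta\|_*$, and it is exactly the hypothesis $\gamma<2$ that keeps this factor bounded as $h\to 0$.

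Collecting the convection, reaction, diffusion, and stabilization estimates then yields \eqref{eq:continuity}. I expect the genuinely delicate point to be the edge term: one must be careful that the element-boundary traces combine into the jump $[\nu_h\cdot\beta_h]$ (so that the $O(h^2)$ smallness of Assumption~B is actually used, rather than a crude $O(1)$ bound on $\nu_h\cdot\beta_h$), and one must split the factor $h^2$ between $\|\eta\|_\mcEh$ and $\|v\|_\mcEh$ so that each factor lands on the norm it is meant to produce. Everything else is routine Cauchy--Schwarz bookkeeping against the mesh norm.
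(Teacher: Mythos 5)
Your proposal is correct and follows essentially the same route as the paper's own proof: the same term-by-term Cauchy--Schwarz bounds against $\tn\cdot\tn_h$ for the reaction, diffusion and stabilization terms, the same surface integration by parts of the convection term yielding $-(\eta,\nablashb v)_\mcKh$, the divergence term, and the edge jump term, and the same treatment of that jump term via Assumption B with the split $h^2 = h^{3/2}\cdot h^{1/2}$, applying \eqref{eq:inverseedge2} to the $\eta$-factor and \eqref{eq:inverseedge} to the $v$-factor. The only cosmetic difference is that the exponent in the $\max(1,\cdot)$ factor coming out of this computation is $(1-\gamma)/2$ rather than the $(2-\gamma)/2$ stated in the lemma, a discrepancy already present in the paper's own intermediate step and immaterial for the a priori estimate.
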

\begin{proof} 
We have
\begin{align}
A_h (\eta,v)&=a_h(\eta,v) + s_{h,1}(\eta , v) + s_{h,2}(\eta,v) 
= I + II + III 
\end{align}
\paragraph{Term $\bfI$.} 
Using partial integration on the discrete surface 
followed by the Cauchy-Schwarz inequality we obtain
\begin{align}
I &= ( \alpha_h \eta , v )_\mcKh 
+ (\nablashb \eta, v)_\mcKh 
+ \epsilon (\nablash \eta, \nablash v)_\mcKh  
\\
&=( (\alpha_h -\divsh \beta_h )\eta , v )_\mcKh 
- (\eta, \nablashb v)_\mcKh 
\\ \nonumber
&\qquad + ([\nu_h \cdot \beta_h]\eta,v)_\mcEh 
+ \epsilon (\nablash \eta, \nablash v)_\mcKh
\\
&\leq (\|\alpha_h \|_{L^\infty(\mcKh)}  
+ \|\divsh \beta_h \|_{L^\infty(\mcKh)} )\|\eta\|_\mcKh \| v \|_\mcKh 
\\ \nonumber
&\qquad + \tau_1^{-\frac12} h^{-1/2}\|\eta\|_\mcKh  \underbrace{\tau_1^{\frac12} h^{1/2} \|\nablashb v\|_\mcKh}_{
\lesssim  \tn v \tn_h}
\\ \nonumber
&\qquad + \underbrace{\|[\nu_h \cdot \beta_h]\|_{L^\infty(\mcEh)} 
\|\eta\|_\mcEh \|v\|_\mcEh}_{\bigstar} 
\\ \nonumber
&\qquad 
+ \epsilon \|\nablash \eta\|_\mcKh \|\nablash v\|_\mcKh
\\
&\lesssim  \tau_1^{-1/2}h^{-1/2} \| \eta \|_\mcKh \tn v \tn_h+
  (\tn \eta \tn_h+ C_\beta \max(1, h^\frac{2-\gamma}{2})  \|\eta\|_*)\tn v \tn_h
\end{align} 
For $\bigstar$ we used \eqref{eq:inverseedge2} and
\eqref{eq:inverseedge} to obtain the bound
\begin{align}
\bigstar 
&= \underbrace{\|[\nu_h \cdot \beta_h]\|_{L^\infty(\mcEh)}}_{\leq
  C_\beta h^2} 
\|\eta\|_\mcEh \|v\|_\mcEh
\\
&\leq C_\beta  h^{1/2} h \|\eta\|_\mcEh h^{1/2} \|v\|_\mcEh
\\
&\lesssim C_\beta  h^{\frac12} \Big(\|\eta \|_\mcTh + h\| \nabla \eta \|_\mcTh + h^2|\eta|_{H^2(\mcTh)} \Big)
\underbrace{\Big(\|v \|^2_\mcKh + \tau_2h\| v \|^2_{s_{h,2}} \Big)^{1/2}}_{\lesssim \max(1, h^\frac{1-\gamma}{2} ) \tn v \tn_h } 
\\
&\lesssim \max(1, h^\frac{1-\gamma}{2}) \| \eta\|_{*} 
\tn v \tn_h
\end{align}
\paragraph{Term $\bfI \bfI$.} 
\begin{align}
II &=\tau_1 h (\alpha_h \eta + \beta_h \cdot \nablash \eta, \beta_h \cdot \nablash v)_\mcKh
\\
&\lesssim \tau_1^{1/2} h^{1/2} \|\alpha_h \eta + \beta_h \cdot \nablash \eta\|_\mcKh \tau_1^{1/2} h^{1/2} \|\beta_h \cdot \nablash v \|_\mcKh
\\
&\lesssim  \tn \eta \tn_h \tn v \tn_h
\end{align}

\paragraph{Term $\bfI \bfI \bfI$.}
\begin{align}
III &= \tau_2^{1/2} h^{\gamma/2} \| n_h \cdot \nabla \eta \|_\mcTh  
 \tau_2^{1/2} h^{\gamma/2} \|n_h\cdot \nabla v\|_\mcTh
\\
&\lesssim  \tn \eta \tn_h  \tn v \tn_h
\end{align}
Collecting the estimates of terms $I$-$III$ proves the lemma. 
\end{proof}

\subsection{Geometric Error Estimates} 
We have the following estimates: for $v \in H^1(\Gamma)$ and 
$w \in V_h$, 
\begin{align}\label{eq:quad-ah}
|a(v,w^l) - a_h(v^e,w)|\lesssim 
h^2 \| v \|_{H^1(\Gamma)} \| w \|_{\mcKh} 
\end{align}
and for $w \in V_h$,
\begin{align}\label{eq:quad-lh}
|l(w^l) - l_h (w) | \lesssim h^2 \|f \|_\Gamma \| w \|_{\mcKh}
\end{align}
\paragraph{Verification of (\ref{eq:quad-ah}).} Using 
(\ref{eq:tanderext}) and changing domain of integration 
from $\Gammah$ to $\Gamma$ we obtain
\begin{align}
(\beta_h \cdot \nablash v^e, w)_\mcKh
- (\beta \cdot \nablas v, w^l)_\Gamma
&= 
(|B|^{-1}(B \beta_h^l \cdot \nablas v), w^l)_\Gamma 
- (\beta \cdot \nablas v, w^l)_\Gamma
\\
&= 
((|B|^{-1}B \beta_h^l - \beta) \cdot \nablas v), w^l)_\Gamma 
\\
&\lesssim 
h^2 \|\nablas v \|_\Gamma \| w \|_\mcKh
\end{align}
where we used Assumption C and (\ref{eq:normequ}) in the last 
step. Using the same approach we obtain 
\begin{align}
(\alpha_h v^e, w)_\Gammah - (\alpha v,w^l)_\Gamma 
&=
((\alpha - |B|^{-1} \alpha_h^l) v,w^l)_\Gamma
\\
&\lesssim 
h^2 \|v\|_{\Gamma} \| w\|_\mcKh
\end{align}
Finally (\ref{eq:quad-lh}) follows in the same way.

\subsection{Consistency}
We now estimate the consistency error which depends on the geometric error. 

\begin{lem} \label{lem:consistency}There is a constant such that for all $v\in V_h$ and $u\in H^2(\Gamma)$
\begin{equation}\label{eq:consistency}
A_h ( u^e,v)-L_h(v) \lesssim \Big( h^2 + \tau_2^{1/2} h^{(\gamma+3)/2 } + \tau_1^{1/2}(h^{5/2} + \epsilon h^{1/2} )\Big)  \| u  \|_{H^2(\Gamma)} \tn v \tn_h 
+h^2 \|f\|_{\Gamma} \tn v \tn_h 
\end{equation}
\end{lem}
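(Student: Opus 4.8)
The plan is to split the consistency error along the three components of $A_h$ and the two of $L_h$,
\[
A_h(u^e,v) - L_h(v) = \underbrace{\big(a_h(u^e,v) - l_h(v)\big)}_{\mathrm{(I)}} + \underbrace{\big(s_{h,1}(u^e,v) - l_{\shone}(v)\big)}_{\mathrm{(II)}} + \underbrace{s_{h,2}(u^e,v)}_{\mathrm{(III)}},
\]
and to bound each piece against one group of terms on the right-hand side of \eqref{eq:consistency}. For (I) I would pass to the continuous forms and use Galerkin orthogonality on $\Gamma$: since $u\in H^2(\Gamma)$ solves \eqref{eq:conva} and $\Gamma$ is closed, integrating the diffusion term by parts gives $a(u,w)=(f,w)_\Gamma$ for all $w\in H^1(\Gamma)$, so with $w=v^l$ the quantity $a(u,v^l)-l(v^l)$ vanishes. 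Writing $\mathrm{(I)} = \big(a_h(u^e,v)-a(u,v^l)\big) + \big(l(v^l)-l_h(v)\big)$ and invoking the geometric-error bounds \eqref{eq:quad-ah} and \eqref{eq:quad-lh} together with $\|v\|_\mcKh\le\tn v\tn_h$ yields the $h^2\|u\|_{H^2(\Gamma)}\tn v\tn_h$ and $h^2\|f\|_\Gamma\tn v\tn_h$ contributions.

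For (II) I would write $\mathrm{(II)} = \tau_1 h\,(R,\beta_h\cdot\nablash v)_\mcKh$ with the discrete strong residual $R = \beta_h\cdot\nablash u^e + \alpha_h u^e - f^e$, and then identify $R$ with the PDE residual modulo geometric error. Using the chain rule \eqref{eq:tanderext} to rewrite $\beta_h\cdot\nablash u^e = (B\beta_h)\cdot(\nablas u)^e$, Assumption C to bound $\|B\beta_h-\beta^e\|_{L^\infty}$ and $\|\alpha_h-\alpha^e\|_{L^\infty}$ by $h^2$, and the strong form $f=\beta\cdot\nablas u+\alpha u-\epsilon\Delta_\Gamma u$, I expect the transport and reaction mismatches to contribute $O(h^2)$ while the surviving diffusion term gives $\epsilon(\Delta_\Gamma u)^e$; with the norm equivalences \eqref{eq:normequ} this should produce $\|R\|_\mcKh\lesssim(h^2+\epsilon)\|u\|_{H^2(\Gamma)}$. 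A Cauchy--Schwarz step absorbing $\tau_1^{1/2}h^{1/2}\|\beta_h\cdot\nablash v\|_\mcKh$ into $\tn v\tn_h$ then gives the $\tau_1^{1/2}(h^{5/2}+\epsilon h^{1/2})\|u\|_{H^2(\Gamma)}\tn v\tn_h$ bound.

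For (III) the crucial observation is that the closest-point extension is constant along normals to $\Gamma$, so $n\cdot\nabla u^e=0$ and therefore $n_h\cdot\nabla u^e=(n_h-n)\cdot\nabla u^e$. The geometric estimate $\|n-n_h\|_{L^\infty(\Gammah)}\lesssim h$ supplies a pointwise factor $h$, while integrating over $\mcTh\subset U_{ch}(\Gamma)$ and using the extension stability with $\delta\sim h$ gives $\|\nabla u^e\|_\mcTh\lesssim h^{1/2}\|u\|_{H^1(\Gamma)}$; hence $\|n_h\cdot\nabla u^e\|_\mcTh\lesssim h^{3/2}\|u\|_{H^2(\Gamma)}$. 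Cauchy--Schwarz together with the mesh-norm estimate $\tau_2^{1/2}h^{\gamma/2}\|v\|_\shtwo\lesssim\tn v\tn_h$ then yields the $\tau_2^{1/2}h^{(\gamma+3)/2}\|u\|_{H^2(\Gamma)}\tn v\tn_h$ term, and collecting (I)--(III) proves the lemma.

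I expect the main obstacle to be (II): one must track that $R$ coincides with $\epsilon(\Delta_\Gamma u)^e$ up to an $O(h^2)$ geometric perturbation, getting both the $\epsilon$ and the $h^2$ scalings exactly right, since any slack here would degrade the optimal order precisely in the vanishing-diffusion regime that motivates the method. The bookkeeping in (I) and (III) is routine by comparison, relying respectively on the already-established estimates \eqref{eq:quad-ah}--\eqref{eq:quad-lh} and on the single identity $n\cdot\nabla u^e=0$.
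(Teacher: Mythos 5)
Your proposal is correct and follows essentially the same route as the paper: the identical three-way splitting, the weak form $a(u,v^l)=l(v^l)$ combined with the geometric estimates \eqref{eq:quad-ah}--\eqref{eq:quad-lh} for (I), subtraction of the extended strong residual $(\beta\cdot\nablas u+\alpha u-\epsilon\Delta_\Gamma u-f)^e=0$ with Assumption C and the chain rule \eqref{eq:tanderext} for (II), and the identity $n\cdot\nabla u^e=0$ together with $\|n-n_h\|_{L^\infty}\lesssim h$ and the thin-neighborhood bound $\|\nabla u^e\|_{\mcTh}\lesssim h^{1/2}\|u\|_{H^1(\Gamma)}$ for (III). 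The scalings you anticipate, $\|R\|_{\mcKh}\lesssim(h^2+\epsilon)\|u\|_{H^2(\Gamma)}$ and the resulting $\tau_1^{1/2}(h^{5/2}+\epsilon h^{1/2})$ and $\tau_2^{1/2}h^{(\gamma+3)/2}$ terms, are exactly those established in the paper's proof.
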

\begin{proof}
We have the identity
\begin{align}
A_h ( u^e,v)-L_h(v)=\underbrace{a_h (u^e,v)  - l_h(v)}_{I}
+ \underbrace{s_{h,1}(u^e,v) - l_{s_{h,1}}(v)}_{II} + 
\underbrace{s_{h,2}(u^e,v)}_{III} 
 \end{align}

\paragraph{Term $\bfI$.} Using the geometry error estimates (\ref{eq:quad-ah}) and (\ref{eq:quad-lh}) we directly obtain
\begin{align}
I&=a_h(u^e,v) - a(u,v^l) + l(v^l) - l_h(v)
\\
&\lesssim h^2 \| u \|_{H^1(\Gamma)} \|v \|_\mcKh 
+ h^2 \|f \|_\Gamma\|v\|_\mcKh 
\\
&\lesssim h^2 \Big( \| u \|_{H^1(\Gamma)} 
+ \|f \|_\Gamma\Big) \|v\|_\mcKh 
\end{align}

\paragraph{Term $\bfI\bfI$.} Subtracting the quantity 
\begin{equation}
( \beta\cdot \nablas u  + \alpha u - \epsilon \Delta_\Gamma u - f)^e = 0
\end{equation}
and estimating the resulting terms we obtain
\begin{align}
II &= \tau_1 h (\nablashb u^e + \alpha_h u^e - f_h,\nablashb v)_\mcKh 
\\
&=
\tau_1h ((\nablashb u^e + \alpha_h u^e - f^e) 
-  (\beta\cdot \nablas u  + \alpha u - \epsilon \Delta_\Gamma u - f)^e ,\nablashb v)_\mcKh 
\\
 &\leq
\tau_1^{1/2} h^{1/2} \Big( \| \nablashb u^e - (\beta\cdot \nablas u)^e \|_{\mcKh} 
+ \|\alpha_h u^e - (\alpha u)^e \|_\mcKh
\\ \nonumber  
&\qquad \qquad \qquad  
+ \epsilon \| (\Delta_\Gamma u)^e \|_\mcKh 
\Big)\tau_1^{1/2}  h^{1/2} \|\nablashb v\|_\mcKh
 \\
&\leq
\tau_1^{1/2} (h^{5/2} + h^{1/2} \epsilon) \|u\|_{H^2(\Gamma)} 
\tn v \tn_h
\end{align}
Here we used the estimates
\begin{align}\nonumber
& \| \nablashb u^e - (\beta\cdot \nablas u)^e \|_{\mcKh} 
 \\
 &\qquad=
 \|\beta_h \cdot B^T \nablas u - (\beta\cdot \nablas u)^e \|_{\mcKh} 
 \\
 &\qquad \lesssim
  \|(B \beta_h) \cdot \nablas u - \beta\cdot \nablas u \|_{\Gamma}
   \\
 &\qquad \lesssim 
 \| (B \beta_h) - \beta  \|_{L^\infty(\Gamma)} 
 \|\nablas u \|_{\Gamma} 
    \\
&\qquad \lesssim h^2 \| u \|_{H^1(\Gamma)}
\end{align}
where  we used Assumption C and (\ref{eq:B-detbound}) to conclude 
that 
\begin{align}
 \| (B \beta_h) - \beta  \|_{L^\infty(\Gamma)} 
&
\lesssim 
 \| |B|^{-1} B \beta_h - \beta  \|_{L^\infty(\Gamma)} 
 + \| (|B|^{-1} - 1) B \beta_h \|_{L^\infty(\Gamma)} 
\lesssim 
h^2
\end{align}
Next again using Assumption C and  (\ref{eq:B-detbound}) 
we have
\begin{align}
 \|\alpha_h u^e - (\alpha u)^e \|_\mcKh 
 &\lesssim 
  \|(\alpha_h - \alpha^e) u^e \|_\mcKh
  \\
   &\lesssim 
  \|(\alpha_h - \alpha^e)\|_{L^\infty(\Gammah)} \| u^e \|_\mcKh
  \\
  &\lesssim h^2 \|u \|_\Gamma
\end{align}
Finally, the last term is estimated as follows
\begin{align}
\|(\Delta_\Gamma u)^e \|_\mcKh  &\lesssim 
\| \Delta_\Gamma u \|_\Gamma\lesssim \| u \|_{H^2(\Gamma)}
\end{align}
and thus 
\begin{equation}
\tau_1^{1/2} h^{1/2} \epsilon \|(\Delta_\Gamma u)^e \|_\mcKh 
\lesssim 
\tau_1^{1/2} h^{1/2} \epsilon \| u \|_{H^2(\Gamma)}
\end{equation}
%

\paragraph{Term $\bfI \bfI \bfI$.} We directly obtain
\begin{align}
III&\lesssim \tau_2h^\gamma \|n_h \cdot \nabla u^e \|_\mcTh  \| n_h \cdot \nabla v \|_\mcTh
\\
&\lesssim \tau_2h^\gamma  \|(n_h - n ) \cdot \nabla u^e \|_\mcTh  
\| n_h \cdot \nabla v \|_\mcTh
\\
&\lesssim\tau_2^{1/2}  h^{\gamma/2} \|(n_h - n )\|_{L^\infty(\Gammah)} 
\| \nabla u^e \|_\mcTh 
\tau_2^{1/2} h^{\gamma/2} \| n_h \cdot \nabla v \|_\mcTh
\\
&\lesssim \tau_2^{1/2} h^{(\gamma+3)/2 }   \| u \|_{H^1(\Gamma)} \tn v \tn_h 
\end{align}
Collecting the estimates for the terms $I$-$III$ yields the result.  
\end{proof}

Note that from the proof we see that $\gamma$ must be larger or equal to zero.
This lower bound on $\gamma$ guarantees that the stabilization is weak enought not to affect the optimal order of convergence, see Theorem \ref{thm:apriori-energy} in the next section. 

\subsection{A Priori Error Estimate}
In this section we will prove an a priori error estimates that is
optimal for both convection and diffusion dominated flows. In the
convection dominated regime the error measured in the streamline
derivative norm is optimal, $O(h)$, whereas the error in the
$L^2$-norm is suboptimal with $O(h^{1/2})$. In the diffusion dominated
regime, we show that the error in the $H^1$-norm is optimal $O(h)$. In
the latter case it is also possible to prove optimal error estimates
in the $L^2$-norm following \cite{OlReGr09,BuHaLa15}, we leave the details of this
estimate to the reader.
\begin{thm}\label{thm:apriori-energy} Let $u$ be the solution to (\ref{eq:conva}) and $u_h$ the finite element approximation 
defined by (\ref{eq:fem}). If Assumptions A-C hold and $\tau_1$ is chosen as in equation \eqref{eq:tau1}
then, there is a constant 
such that for all $h\in (0,h_0]$, with $h_0$ small enough, and $0 \leq \gamma<2$,
\begin{equation}
\tn u^e - u_h \tn_h \lesssim \Big(\max(\beta_\infty^{\frac12} h^{\frac32}, \epsilon^{\frac12} h)+ \tau_2^{1/2} h^{(\gamma+3)/2}\Big) \| u \|_{H^2(\Gamma)} 
+ h^2 \|f \|_\Gamma
\end{equation}
\end{thm}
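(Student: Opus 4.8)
The strategy is the classical Céa-type argument for stabilized methods, combining Galerkin orthogonality (here replaced by the consistency estimate), coercivity, and continuity. The plan is to split the error $u^e - u_h$ through the Scott--Zhang interpolant $\pi_h u^e$, writing
\begin{equation}
u^e - u_h = (u^e - \pi_h u^e) + (\pi_h u^e - u_h) =: \eta + e_h,
\end{equation}
where $e_h = \pi_h u^e - u_h \in V_h$ is the discrete error. The interpolation part $\eta$ is controlled directly by the energy-norm interpolation estimate \eqref{eq:interpol-energy}, which already gives the $\max(\beta_\infty^{1/2} h^{3/2}, \epsilon^{1/2} h)$ term in the bound. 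So the work reduces to estimating $\tn e_h \tn_h$.

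First I would apply coercivity (Lemma~\ref{lem:coercivity}) to $e_h$, obtaining $c_{\text{coer}} \tn e_h \tn_h^2 \lesssim A_h(e_h, e_h)$. Then I would rewrite $A_h(e_h, e_h) = A_h(\pi_h u^e - u_h, e_h)$, add and subtract $u^e$, and use the discrete equation $A_h(u_h, e_h) = L_h(e_h)$ together with the consistency estimate (Lemma~\ref{lem:consistency}) for the term $A_h(u^e, e_h) - L_h(e_h)$. This leaves
\begin{equation}
A_h(e_h, e_h) = A_h(\pi_h u^e - u^e, e_h) + \big(A_h(u^e, e_h) - L_h(e_h)\big) = -A_h(\eta, e_h) + \text{(consistency)}.
\end{equation}
The consistency term is bounded by Lemma~\ref{lem:consistency} by $(h^2 + \tau_2^{1/2} h^{(\gamma+3)/2} + \tau_1^{1/2}(h^{5/2} + \epsilon h^{1/2})) \|u\|_{H^2(\Gamma)} \tn e_h \tn_h$ plus the $f$-term. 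The cross term $A_h(\eta, e_h)$ is bounded by the continuity estimate (Lemma~\ref{lem:continuity}), since $\eta \in H^2(\mcTh)$ lies in the admissible space there.

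The main obstacle is the first (non-symmetric) term in the continuity bound, namely $\tau_1^{-1/2} h^{-1/2} \|\eta\|_\mcKh \tn e_h \tn_h$: the negative power of $h$ and of $\tau_1$ threatens to spoil the optimal rate. The key is to check that the interpolation estimate \eqref{eq:interpol-basic} gives $\|\eta\|_\mcKh \lesssim h^2 \|u\|_{H^2(\Gamma)}$, so this term contributes $\tau_1^{-1/2} h^{3/2} \|u\|_{H^2(\Gamma)}$, and then to verify using the definition \eqref{eq:tau1} of $\tau_1$ that $\tau_1^{-1/2} h^{3/2} \sim \max(\beta_\infty^{1/2}, \epsilon^{1/2} h^{-1/2}) h^{3/2} = \max(\beta_\infty^{1/2} h^{3/2}, \epsilon^{1/2} h)$, which matches the target rate exactly. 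I must also confirm that the $\|\eta\|_*$ contribution in the continuity bound, which carries $h^{1/2}$ from the definition of $\|\cdot\|_*$ together with factors $\|\eta\|_\mcTh + h\|\nabla\eta\|_\mcTh + h^2|\eta|_{H^2}$ estimated via \eqref{eq:interpol-basic} and the extension stability, stays at the optimal order. After dividing through by $\tn e_h \tn_h$ and absorbing $c_{\text{coer}}$ (positive for $h_0$ small and $\gamma < 2$), the triangle inequality $\tn u^e - u_h \tn_h \leq \tn \eta \tn_h + \tn e_h \tn_h$ assembles the stated estimate.
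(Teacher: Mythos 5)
Your proposal is correct and follows essentially the same route as the paper: splitting through the Scott--Zhang interpolant, bounding the interpolation part by \eqref{eq:interpol-energy}, and controlling the discrete part via coercivity (Lemma~\ref{lem:coercivity}), continuity (Lemma~\ref{lem:continuity}) applied to $\eta = \pi_h u^e - u^e$, and consistency (Lemma~\ref{lem:consistency}), with the same key observation that $\tau_1^{-1/2} h^{3/2} \sim \max(\beta_\infty^{1/2} h^{3/2}, \epsilon^{1/2} h)$ by the definition \eqref{eq:tau1}; your choice to test directly with $e_h$ rather than take a supremum over $v \in V_h$ is an immaterial variant. The only point you leave implicit is that the consistency term $\tau_1^{1/2}\epsilon h^{1/2}$ must also be absorbed into $\max(\beta_\infty^{1/2} h^{3/2}, \epsilon^{1/2} h)$, which the paper verifies case by case in its Peclet-number discussion, but this follows from the same two-regime computation you already carry out.
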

\begin{proof} Adding and subtracting an interpolant and using the 
triangle inequality
\begin{align}\label{eq:apriori}
\tn u^e - u_h \tn_h &\leq \tn u^e - \pi_h u \tn_h 
+  \tn \pi_h u^e - u_h \tn_h
\\
&\lesssim \min(\beta_\infty^{\frac12} h^{\frac32}, \epsilon^{\frac12} h) \|u \|_{H^2(\Gamma)} 
+ \tn \pi_h u^e - u_h \tn_h
\end{align}
where we used the energy norm interpolation estimate (\ref{eq:interpol-energy}) for the first term. For the second term we obtain using coercivity, Lemma \ref{lem:coercivity} and $\gamma<2$,
\begin{equation}
\tn \pi_h u^e - u_h \tn_h \lesssim \sup_{v \in V_h \setminus \{ 0\}}
 \frac{A_h(\pi_h u^e - u_h,v)}{\tn v \tn_h}
\end{equation}
Here we have the identity 
\begin{align}
A_h(\pi_h u^e - u_h,v) 
&=
A_h (\pi_h u^e - u^e,v) + A_h (u^e - u_h,v)
\\
&=
A_h (\pi_h u^e - u^e,v) + A_h (u^e,v)  - L_h(v)
\end{align}
Setting $\eta = \pi_h u^e - u^e$ in the continuity result, in Lemma~\ref{lem:continuity}, using that $\gamma<2$, and the interpolation error estimates we get 
\begin{equation}\label{eq:cont}
A_h (\pi_h u^e - u^e,v)  \lesssim  \Big( \tau_1^{-1/2} h^{3/2}+\max(\beta_\infty^{\frac12} h^{\frac32}, \epsilon^{\frac12} h) \Big) \|u \|_{H^2(\Gamma)}  \tn v \tn_h
\end{equation}
The consistency result, Lemma~\ref{lem:consistency} yields
\begin{equation} 
A_h (u^e,v)  - L_h(v) \lesssim  \Big( h^2 +\tau_2^{1/2}h^{(\gamma+3)/2} + \tau_1^{1/2}(h^{5/2} + \epsilon h^{1/2} )\Big)  \| u  \|_{H^2(\Gamma)} \tn v \tn_h 
+h^2 \|f\|_{\Gamma} \tn v \tn_h 
\end{equation}

\paragraph{Scaling with respect to the Peclet number}
Collecting the above estimates yields the
following bound of the discrete error, if high order terms are neglected,
\[
\tn \pi_h u^e - u_h \tn_h \lesssim \Big(\tau_1^{-1/2} h^{3/2}+
\max(\beta_\infty^{\frac12} h^{\frac32}, \epsilon^{\frac12} h) + \tau_2^{1/2} h^{(\gamma+3)/2}+ \tau_1^{1/2} \epsilon h^{1/2} \Big) \|u\|_{H^2(\Gamma)}+h^2 \|f\|_{\Gamma} 
\]

From the definition of $\tau_1$ (see equation \eqref{eq:tau1}) we have two cases. If $\beta_\infty h  \ge \epsilon$ (the high Peclet number regime) then $\tau_1^{-1/2} h^{3/2}\lesssim \beta_\infty^{\frac12} h^{3/2}$,  $\epsilon^{\frac12} h \leq \beta_\infty^{\frac12} h^{3/2}$, and $\tau_1^{1/2} \epsilon h^{1/2} \lesssim \beta_\infty^{\frac12} h^{3/2}$. Thus, 
\[
\tn \pi_h u^e - u_h \tn_h \lesssim \Big(\beta_\infty^{\frac12}  h^{3/2}+ \tau_2^{1/2} h^{(\gamma+3)/2}\Big) \|u\|_{H^2(\Gamma)}+h^2 \|f\|_{\Gamma} 
\]
If on the other hand $\beta_\infty h  \leq \epsilon$ (low Peclet
number regime), then $\tau_1^{-1/2} h^{3/2} \lesssim \epsilon^{1/2} h$, $\beta_\infty^{\frac12} h^{\frac32}\leq\epsilon^{1/2} h$, and $\tau_1^{1/2} h^{1/2} \epsilon \lesssim \epsilon^{1/2} h$ and therefore
\[
\tn \pi_h u^e - u_h \tn_h \lesssim \Big(\epsilon^{1/2} h+ \tau_2^{1/2} h^{(\gamma+3)/2}\Big) \|u\|_{H^2(\Gamma)}+h^2 \|f\|_{\Gamma} 
\]

Using these estimates and equation \eqref{eq:apriori} proves the theorem.  
\end{proof}

\section{Condition Number Estimate}
Let $\{\varphi_i\}_{i=1}^N$ be the standard piecewise linear 
basis functions associated with the nodes in $\mcT_h$ and 
let $\mcA$ be the stiffness matrix with elements 
$a_{ij} = A_h(\varphi_i,\varphi_j)$. The condition number is 
defined by
\begin{equation}\label{cond_def}
\kappa_h(\mcA) := | \mcA |_{\IR^N} |\mcA^{-1} |_{\IR^N}
\end{equation}
Using the approach in \cite{BuHaLa15}, see also \cite{ErGu06}, 
we may prove the following bound on the condition number of 
the matrix.
\begin{thm}\label{thm:condition-number} 
The condition number of the stiffness matrix $\mcA$ satisfies the estimate
\begin{equation}\label{eq:condition-number}
\kappa_h(\mcA)\lesssim h^{\min(1,2-\gamma)-2}(1 + \epsilon h^{-1} )
\end{equation}
for all $h \in (0,h_0]$ with $h_0$ small enough and $0\leq \gamma < 2$. In particular, for $\gamma = 1$, we obtain the optimal estimate
\begin{equation}
\kappa_h(\mcA)\lesssim h^{-1}(1 + \epsilon h^{-1} )
\end{equation}
\end{thm}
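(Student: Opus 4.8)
The plan is to estimate the two factors in $\kappa_h(\mcA)=|\mcA|_{\IR^N}|\mcA^{-1}|_{\IR^N}$ separately, translating each matrix quantity into a statement about the form $A_h$ acting on finite element functions. Writing $v=\sum_i V_i\varphi_i$ and $w=\sum_i W_i\varphi_i$ with coefficient vectors $\mathbf V,\mathbf W\in\IR^N$, the starting point is the standard quasi-uniform scaling $\|v\|_\mcTh^2\sim h^3|\mathbf V|_{\IR^N}^2$ over the active bulk mesh, together with the inverse/trace estimates for piecewise linears on cut elements, $\|v\|_\mcKh\lesssim h^{-1/2}\|v\|_\mcTh$, $\|\nablash v\|_\mcKh\lesssim h^{-3/2}\|v\|_\mcTh$ and $\|v\|_\shtwo\lesssim h^{-1}\|v\|_\mcTh$. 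These let me pass freely between the Euclidean norm of $\mathbf V$, the bulk $L^2$-norm, and the surface quantities entering $A_h$ and $\tn\cdot\tn_h$.

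For the upper bound $|\mcA|_{\IR^N}=\sup_{v,w\in V_h}A_h(v,w)/(|\mathbf V|\,|\mathbf W|)$ I would bound $A_h(v,w)$ term by term, applying the inverse estimates above to each contribution of $a_h$, $s_{h,1}$ and $s_{h,2}$ so that every factor is expressed through $\|v\|_\mcTh\|w\|_\mcTh$. With the $O(1)$ assumption on the coefficients this yields
\[
A_h(v,w)\lesssim\big(h^{-2}+\epsilon h^{-3}+\tau_1 h^{-2}+\tau_2 h^{\gamma-2}\big)\|v\|_\mcTh\|w\|_\mcTh,
\]
and after multiplying by $h^3$ via the scaling identity,
\[
|\mcA|_{\IR^N}\lesssim h+\epsilon+\tau_1 h+\tau_2 h^{\gamma+1}.
\]

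The lower bound controlling $|\mcA^{-1}|_{\IR^N}$ is where the stabilization does its work. Coercivity (Lemma~\ref{lem:coercivity}) gives $A_h(v,v)\gtrsim c_{coer}\tn v\tn_h^2$, and the Poincar\'e-type inequality of Lemma~\ref{lem:poincare-normalstab} lets me bound the bulk norm by the mesh norm: using $\|v\|_\mcKh^2\le\tn v\tn_h^2$ and $h^2\|v\|_\shtwo^2\le h^{2-\gamma}\tau_2^{-1}\tn v\tn_h^2$ in $\|v\|_\mcTh^2\lesssim h\|v\|_\mcKh^2+h^2\|v\|_\shtwo^2$ I obtain
\[
\|v\|_\mcTh^2\lesssim h\big(1+h^{1-\gamma}\tau_2^{-1}\big)\tn v\tn_h^2.
\]
Combining with $\|v\|_\mcTh^2\sim h^3|\mathbf V|^2$ gives $\tn v\tn_h^2\gtrsim h^2(1+h^{1-\gamma}\tau_2^{-1})^{-1}|\mathbf V|^2$, and since $A_h(v,v)=(\mcA\mathbf V,\mathbf V)_{\IR^N}\le|\mcA\mathbf V|\,|\mathbf V|$ this forces $|\mcA\mathbf V|\gtrsim c_{coer}h^2(1+h^{1-\gamma}\tau_2^{-1})^{-1}|\mathbf V|$, hence
\[
|\mcA^{-1}|_{\IR^N}\lesssim\frac{1+h^{1-\gamma}\tau_2^{-1}}{c_{coer}\,h^2}.
\]

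It remains to multiply the two bounds and substitute $\tau_1\sim\min(\beta_\infty^{-1},h\epsilon^{-1})$ and $\tau_2\sim\tau_1^{-1}\sim\max(\beta_\infty,\epsilon h^{-1})$. Distinguishing the high Peclet regime ($\epsilon\lesssim h$, where $\tau_1\sim1$, $\tau_2\sim1$) from the low Peclet regime ($\epsilon\gtrsim h$, where $\tau_1\sim h\epsilon^{-1}$, $\tau_2\sim\epsilon h^{-1}$) and using $0\le\gamma<2$ collapses the expression to $h^{\min(1,2-\gamma)-2}(1+\epsilon h^{-1})$, with the factor $h^{\min(1,2-\gamma)}$ tracing back to the competition between $\tau_2 h^{\gamma+1}$ in $|\mcA|$ and $h^{1-\gamma}\tau_2^{-1}$ in $|\mcA^{-1}|$; the choice $\gamma=1$ balances these and gives the optimal $h^{-1}(1+\epsilon h^{-1})$. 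The main obstacle is not the algebra but the lower bound step: on a general cut mesh the surface norm $\|v\|_\mcKh$ alone cannot control $|\mathbf V|$ because of arbitrarily small element cuts, so the whole estimate hinges on using Lemma~\ref{lem:poincare-normalstab} to recover coercivity over the bulk, and on keeping track of the fractional $h$-powers produced by $\tau_1$, $\tau_2$ and $\epsilon$ so that they combine to the stated exponent rather than being discarded too early.
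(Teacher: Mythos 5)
Your proposal is correct and follows essentially the same route as the paper's proof: the scaling $\|v\|_{\mathcal{T}_h}^2\sim h^{3}|V|_{\mathbb{R}^N}^2$, inverse estimates applied term by term to $a_h$, $s_{h,1}$, $s_{h,2}$ for the bound on $|\mathcal{A}|_{\mathbb{R}^N}$, and coercivity (Lemma \ref{lem:coercivity}) combined with the normal-stabilization Poincar\'e inequality (Lemma \ref{lem:poincare-normalstab}) for the bound on $|\mathcal{A}^{-1}|_{\mathbb{R}^N}$. The only differences are cosmetic: you keep $\tau_1,\tau_2$ explicit and split into Peclet regimes at the end (slightly more careful than the paper, which absorbs them as $O(1)$ factors), and you extract the lower bound from the diagonal inequality $c_{coer}\tn v\tn_h^2\le A_h(v,v)\le |V|\,|\mathcal{A}V|$ rather than the paper's equivalent sup-over-$w$ duality argument.
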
 
\begin{proof}
First we note that if $v = \sum_{i=1}^N V_{i} \varphi_i$ 
and $\{\varphi_i\}_{i=1}^N$ is the usual nodal basis on 
$\mcT_h$ then the following well known estimates hold
\begin{equation}\label{rneqv}
c h^{-d/2} \| v \|_{\mcT_h} \leq | V |_{\IR^N} \leq C h^{-d/2}\| v \|_{\mcT_h}
\end{equation}
It follows from the definition (\ref{cond_def}) of the condition 
number that we need to estimate $| \mcA |_{\IR^N}$ and 
$|\mcA^{-1}|_{\IR^N}$. 

\paragraph{Estimate of $| \mcA |_{\IR^N}$.} We have
\begin{align}
|\mcA V|_{\IR^N} &= \sup_{W \in \IR^N \setminus 0} \frac{(W,\mcA V)_{\IR^N}}{| W |_{\IR^N}}
\\
&= \sup_{w \in V_h \setminus 0 }  \frac{A_h(v,w)}{| W |_{\IR^N}}
\\
&\lesssim h^{d-2}(1+\epsilon h^{-1}) | V |_{\IR^N}
\end{align}
where we used the continuity 
\begin{equation}\label{eq:cond-Ah-cont}
A_h(v,w) \lesssim h^{d-2}(1+\epsilon h^{-1}) |V|_{\IR^N} | W |_{\IR^N}
\end{equation}
To verify (\ref{eq:cond-Ah-cont}) we use inverse estimates to 
derive bounds in terms of $\|v\|_{\mcTh}$ and $\|w\|_{\mcTh}$ 
and then we employ (\ref{rneqv}) to pass over to the $|\cdot |_\IR^N$ norms
\begin{align}
a_h(v,w) &\lesssim 
\| \beta_h \cdot \nablash v \|_{\mcK_h} \| w \|_{\mcK_h} 
+ \| \alpha_h v \|_{\mcK_h}
\|  w \|_{\mcK_h}
+ \epsilon \| \nablash v \|_\mcKh \| \nablash w \|_\mcKh 
\\
&\lesssim 
h^{-1} \|  \nabla v \|_{\mcT_h} \| w \|_{\mcT_h} 
+ h^{-1} \| v \|_{\mcT_h}\|  w \|_{\mcT_h}
+ \epsilon h^{-1} \| \nabla v \|_\mcTh \| \nabla w \|_\mcTh 
\\
&\lesssim 
(h^{-2} 
+ h^{-1} 
+ \epsilon h^{-3}) \| v \|_\mcTh \|  w \|_\mcTh 
\\
&\lesssim h^{d-2}(1+\epsilon h^{-1}) |V|_{\IR^N} | W |_{\IR^N}
\\ \nonumber
\\
s_{h,1}(v,w) &= \tau_1 h (\nablashb v + \alpha_h v, \nablashb w)_\mcKh
\\
&\lesssim h\|\nablashb v \|_\mcKh \|\nablashb w \|_\mcKh 
+ h \|\alpha_h v \|_\mcKh \|\nablashb w \|_\mcKh
\\
&\lesssim \| \nabla v \|_\mcTh \| \nabla w \|_\mcTh 
+  \|v \|_\mcTh \| \nabla w \|_\mcTh
\\
&\lesssim ( h^{-2} + h^{-1} )\| v \|_\mcTh \| w \|_\mcTh 
\\
&\lesssim 
h^{d-2} |V|_{\IR^N} | W |_{\IR^N}
\\ \nonumber
\\
s_{h,2}(v,w) &\lesssim h^\gamma \|n_h \cdot \nabla v \|_\mcTh 
\|n_h \cdot \nabla w \|_\mcTh
\\
 &\lesssim h^\gamma \| \nabla v \|_\mcTh 
\| \nabla w \|_\mcTh
\\
 &\lesssim h^{\gamma-2} \| v \|_\mcTh 
\| w \|_\mcTh
\\
&\lesssim 
h^{d+\gamma-2} |V|_{\IR^N} | W |_{\IR^N}
\\
&\lesssim 
h^{d-2} |V|_{\IR^N} | W |_{\IR^N}
\end{align}
where we used the inverse estimate 
$\| w \|_{K} \lesssim \textcolor{red}{h^{-\frac12}} \| w \|_T$, where $K = T \cap \Gammah$  
to pass from $\mcK_h$ to $\mcT_h$, and the standard 
inverse estimate $\| \nabla w \|_T \lesssim h^{-1} \| w \|_T$ to 
remove the gradient.

We conclude that
\begin{equation}\label{Abound}
| \mcA |_{\IR^N} \lesssim h^{d-2}(1+\epsilon h^{-1})
\end{equation}

\paragraph{Estimate of $|\mcA^{-1} |_{\IR^N}$.} We note that 
using \eqref{rneqv} and Lemma \ref{lem:poincare-normalstab} 
we have 
\begin{equation}\label{eq:condtechnicala}
 h^d | V |^2_{\IR^N}  
 \lesssim \|v\|^2_{\mcT_h} 
 \lesssim h \|v\|_{\mcK_h}^2  + h^{2-\gamma} \| v \|^2_{s_h} 
 \lesssim h^{\min(1,2-\gamma)} \tn v \tn_h^2
\end{equation}
Thus it follows that 
\begin{equation}\label{eq:condtechnicalaa}
 h^{d-\min(1,2-\gamma)} | V |^2_{\IR^N} 
 =   
 h^{\widetilde{d}} | V |^2_{\IR^N}
 \lesssim 
 \tn v \tn_h^2
\end{equation}
where we introduced the notation $\widetilde{d} = d-\min(1,2-\gamma)$. Starting from \eqref{eq:condtechnicala} and using the coercivity 
(\ref{eq:coercivity})  we obtain 
\begin{align}
 | V |_{\IR^N} 
 \lesssim h^{-\widetilde{d}/2} \tn
 v \tn_h 
&\lesssim h^{-\widetilde{d}/2} \sup_{w \in V_h \setminus \{0\} }  \frac{A_h(v,w)}{\tn w
  \tn_h} 
\\  
&\qquad  
\lesssim \sup_{W \in \IR^N\setminus \{0\}}
 h^{-\widetilde{d}/2} \frac{ | \mcA V|_{\IR^N}  |W|_{\IR^N}
}{h^{{\widetilde{d}}/{2}} |W|_{\IR^N}} \lesssim h^{-\widetilde{d}}  | \mcA V|_{\IR^N} 
\end{align}
where we used (\ref{eq:condtechnicala}), 
$h^{\widetilde{d}/2} |W|_{\IR^N} \lesssim \tn w \tn_h$,
to replace $\tn w \tn_h$ by $h^{d/2} |W|_{\IR^N} $  in the denominator.
Setting $V= \mcA^{-1} X$, $X \in \IR^N$, we obtain
\begin{equation}\label{Ainvbound}
| \mcA^{-1}  |_{\IR^N} \lesssim h^{-\widetilde{d}} =h^{-(d-\min(1,2-\gamma))}  
\end{equation}

\paragraph{Conclusion.} Combining the estimates \eqref{Abound} and \eqref{Ainvbound} the estimate (\ref{eq:condition-number})  follows.
\end{proof}

\section{Numerical Examples}

\subsection{Convection--Diffusion} 
We consider convection--diffusion on the spheroid defined by 
\[
\frac{(x-1/2)^2 + (y-1/2)^2}{r_\text{max}^2} + \frac{(z-1/2)^2}{r_\text{min}^2} = 1
\]
with $r_\text{max}=1/2$ and $r_\text{min}=1/4$. The convective velocity was chosen as
\[
\beta=(1/2-y,x-1/2,0) 
\]
and parameters $\alpha=0$, $c_\tau=1/2$ in (\ref{eq:tau1}), $\gamma=0$ in (\ref{eq:sh-two}). The right-hand $f$ is set by 
applying the differential operator to the fabricated solution
\[
u(x,y,z) = 100(x-1/2)(y-1/2)(z-1/2)
\]
In Fig \ref{fig:iso} we show an isoplot of the solution using $\epsilon = 10^{-3}$ on a given mesh in a sequence of refinements,
and in Fig. \ref{fig:velo} we show the velocity field plotted on the same mesh.
Finally, in Fig. \ref{fig:conv} we present the convergence in $L_2(\Gamma_h)$ obtained by our method, close to second order.

\subsection{Convection--Reaction with a Layer} 

We consider convection--diffusion on the spheroid defined by 
\[
\frac{(x-1/2)^2 + (y-1/2)^2}{r_\text{max}^2} + \frac{(z-1/2)^2}{r_\text{min}^2} = 1
\]
with $r_\text{max}=0.5$ and $r_\text{min}=0.45$. The convective velocity was chosen as
\[
\beta= (5-10 y,10 x-5,0) 
\]
and parameters $\alpha=1$, $\epsilon = 0$. The right-hand $f$ was chosen as
\[
f= \left\{\begin{array}{c}
\text{1 if $z > 0.55$}\\
\text{0 if $z \leq 0.55$}\end{array}\right.
\]
creating a discontinuity at $z=0.55$.
In Fig \ref{fig:iso2} we show isoplots of the solution using $c_\tau=0$, $\gamma = 10^{-4}$ (top), and $\gamma = 10^{3}$ (bottom)on a given mesh.
notice the instability for small $\gamma$ end and excessive diffusivity for large $\gamma$.
In Fig. \ref{fig:iso3} we show the corresponding isoplot for $\gamma=1$, $c_\tau=0$, and in Fig. \ref{fig:iso4} we used $c_\tau=1/2$ $\gamma=0$.
In both cases there are, as expected, slight over- and undershoots close to the discontinuity.

\section*{Acknowledgements}
This research was supported in part by the Swedish Foundation for Strategic Research Grant No.\ AM13-0029 (PH,MGL), the Swedish Research Council Grants Nos.\ 2011-4992 (PH) and 2013-4708 (MGL), and EPSRC, UK, 
Grant Nr. EP/P01576X/1. (EB)

  \newpage
\begin{figure}
\begin{center}
\includegraphics[scale=0.3]{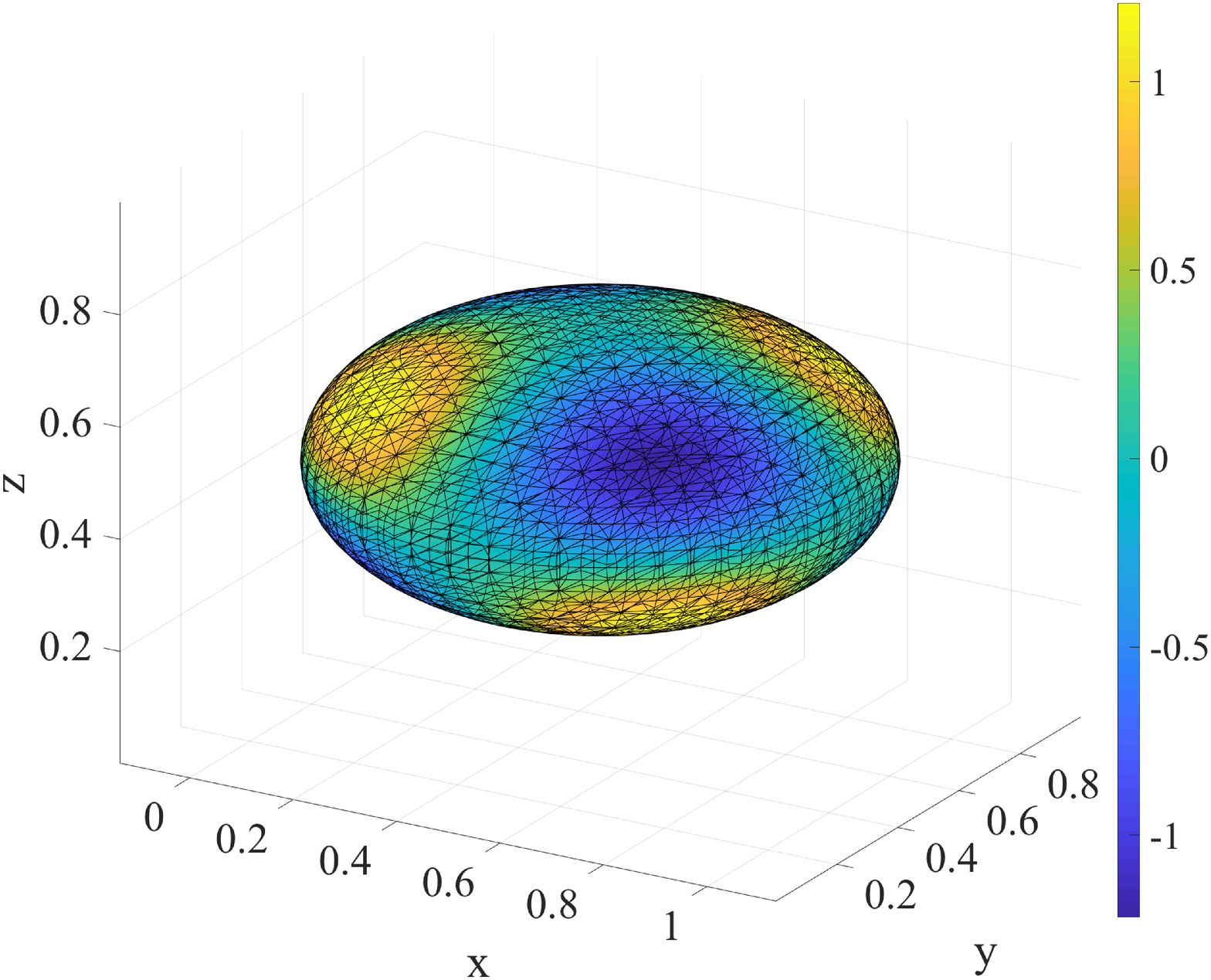}
\end{center}
\caption{Isoplot of the solution on a given mesh.\label{fig:iso}}
\end{figure}
\begin{figure}
\begin{center}
\includegraphics[scale=0.3]{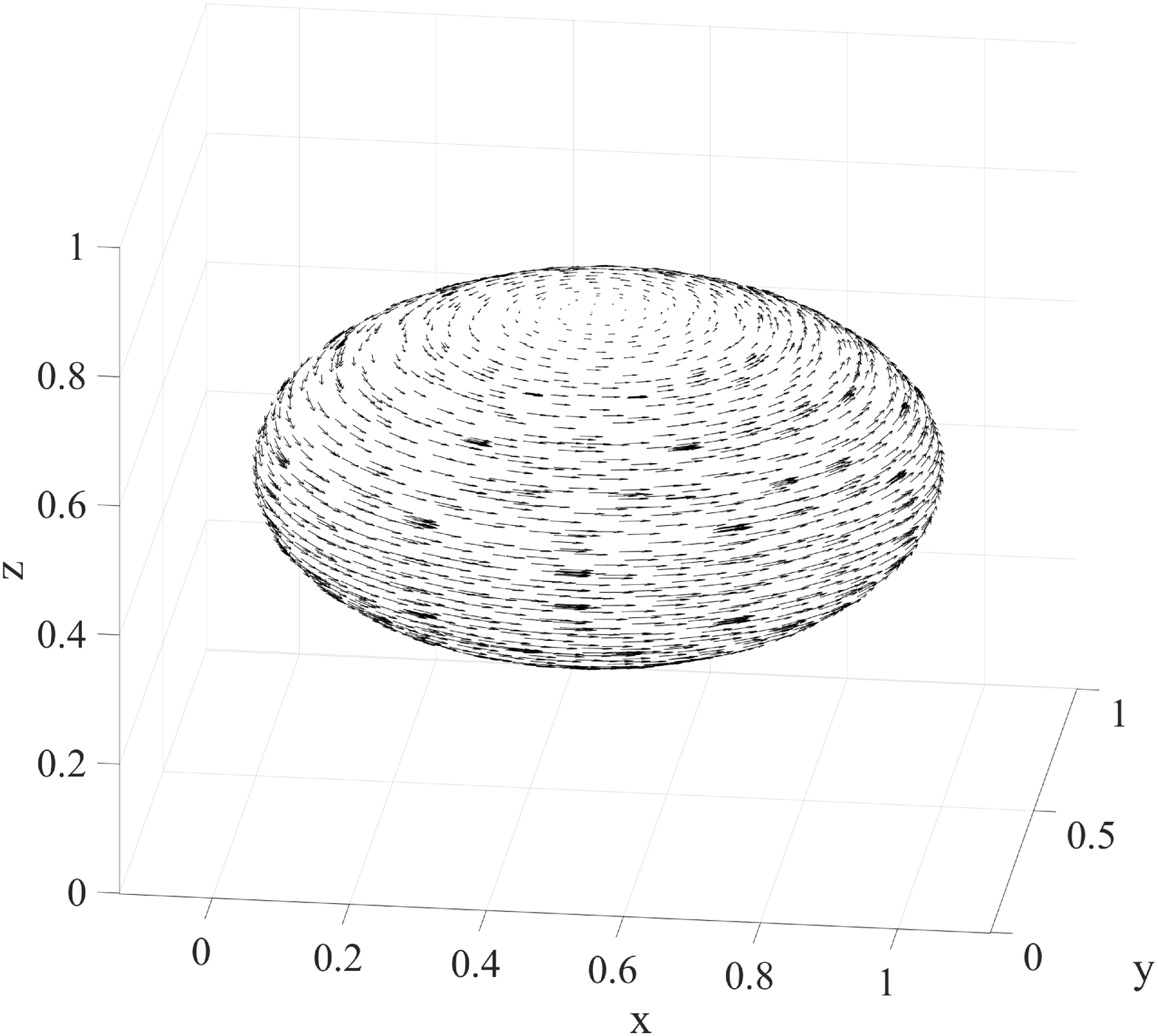}
\end{center}
\caption{Velocity field on a given mesh.\label{fig:velo}}
\end{figure}
\begin{figure}
\begin{center}
\includegraphics[scale=0.3]{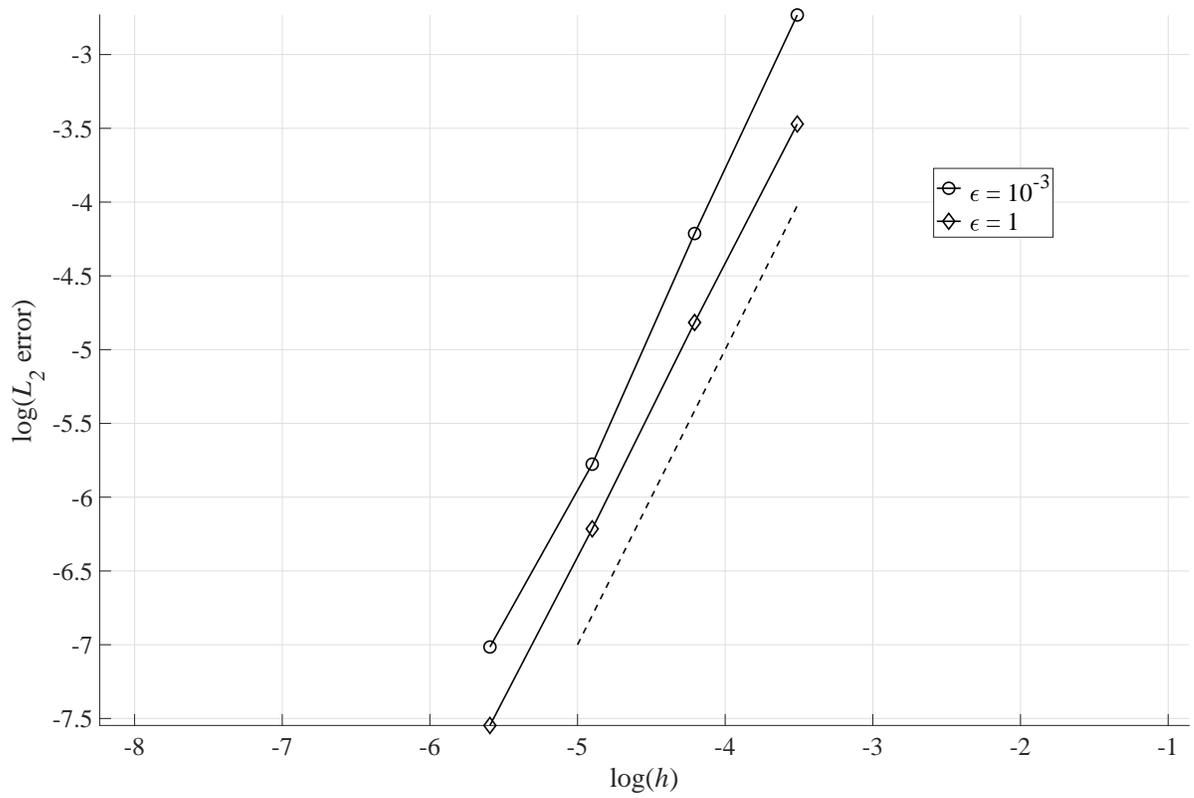}
\end{center}
\caption{$L_2(\Gamma_h)$--convergence of the discrete solution. Dotted line indicates second order convergence.\label{fig:conv}}
\end{figure}
\begin{figure}
\begin{center}
\includegraphics[scale=0.3]{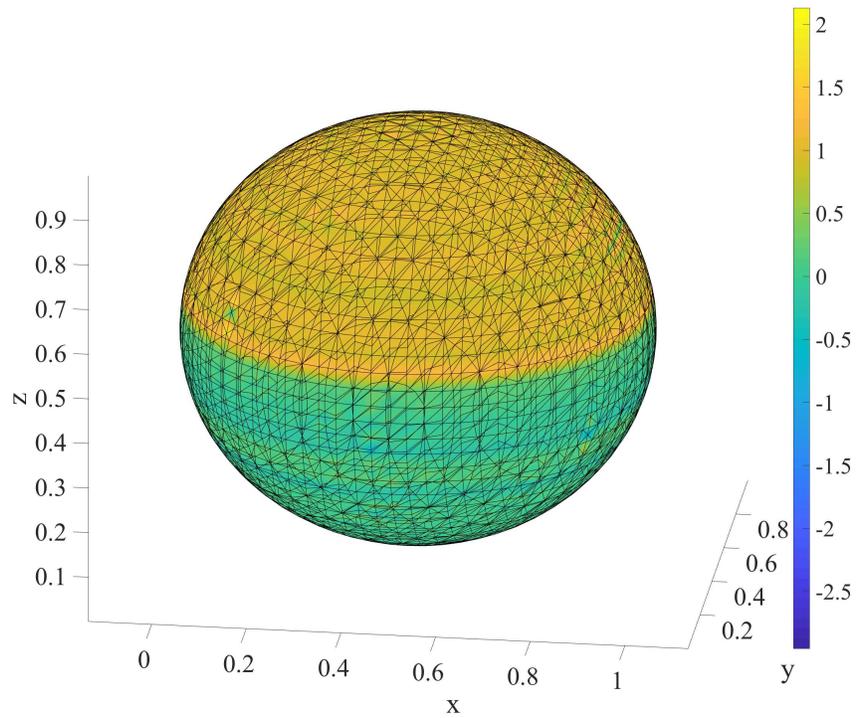}
\includegraphics[scale=0.3]{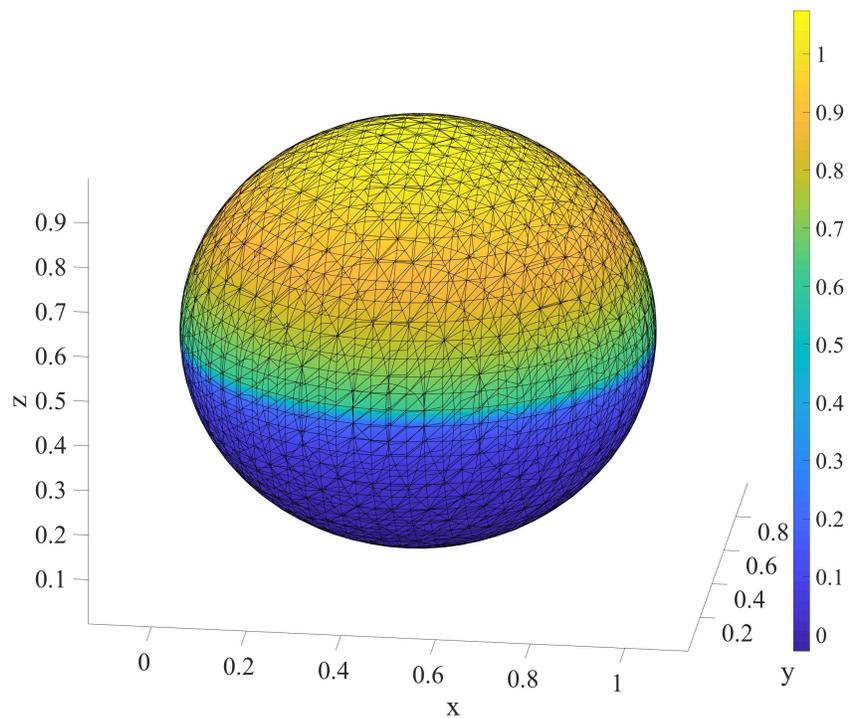}
\end{center}
\caption{Isoplot of the solution on a given mesh for $c_\tau=0$. Top: $\gamma = 10^{-4}$, bottom: $\gamma = 10^{3}$\label{fig:iso2}}
\end{figure}
\begin{figure}
\begin{center}
\includegraphics[scale=0.3]{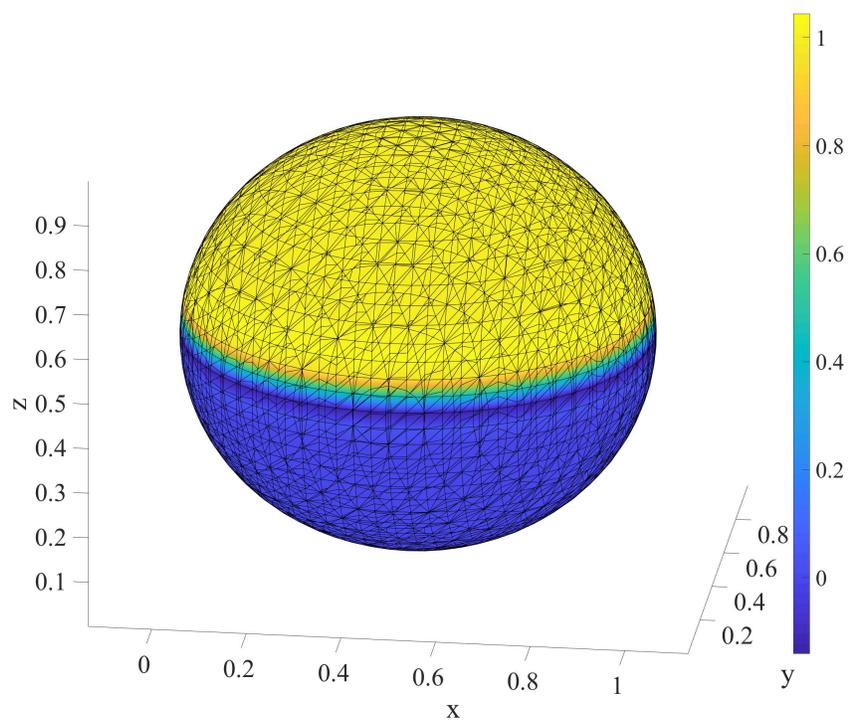}
\end{center}
\caption{Isoplot of the solution for $c_\tau=0$, $\gamma = 1$\label{fig:iso3}}
\end{figure}
\begin{figure}
\begin{center}
\includegraphics[scale=0.3]{}
\end{center}
\caption{Isoplot of the solution for $\gamma = 0$, $c_\tau=1/2$\label{fig:iso4}}
\end{figure}

\end{document}